\documentclass[12pt]{amsart}
\usepackage{amssymb, amsmath}
\usepackage{graphics}
\usepackage{epsfig}
\usepackage{amsxtra}
\usepackage{color}

\oddsidemargin0.25in
\evensidemargin0.25in
\textwidth6.00in
\topmargin0.00in
\textheight8.50in

\numberwithin{equation}{section}

\def\bbbone{{\mathchoice {1\mskip-4mu {\rm{l}}} {1\mskip-4mu {\rm{l}}}
{ 1\mskip-4.5mu {\rm{l}}} { 1\mskip-5mu {\rm{l}}}}}

\newtheorem{thm}{Theorem}[section]

\newtheorem{lemma}[thm]{Lemma}

\newtheorem{prop}[thm]{Proposition}

\theoremstyle{definition}

\newcommand{\defeq}{\stackrel{\rm{def}}{=}}

\renewcommand{\Re}{\operatorname{\rm Re}\nolimits}
\renewcommand{\Im}{\operatorname{\rm Im}\nolimits}

\def \supp {\operatorname{supp}}

\def \supp {\operatorname{supp}}
\def \tr {\operatorname{tr}}

\def \mco {{\mathcal O}}

\def \vol {{\rm vol}}

\def \restrict {\upharpoonright}

\def \mcd {{\mathcal D}}
\def \mch {{\mathcal H}}
\def \Real {{\mathbb R}}

\def \Sphere {\mathbb{S}}
\def \Complex {\mathbb{C}}
\def \Natural {{\mathbb N}}
\def \Sphere {{\mathbb S}}

\def \mcr{{\mathcal R}}

\def \U+ {U_+}
\def \Integers {{\mathbb Z}}

\title
 [Sharp lower bound]
{
A sharp lower bound for a resonance-counting function in even dimensions
}
   \author { T.J. Christiansen}
\address{Department of Mathematics,
University of Missouri,
Columbia, Missouri 65211, USA} 
\email{christiansent@missouri.edu}

\begin{document}

\begin{abstract}  This paper proves sharp lower bounds on a resonance counting function
for obstacle scattering in even-dimensional Euclidean space without 
a need for trapping assumptions. Similar lower bounds
are proved for some other compactly supported perturbations of
$-\Delta$ on $\Real^d$, for example, for the Laplacian 
for certain metric perturbations 
on $\Real^d$.  
The proof uses a Poisson formula for resonances, complementary to one 
proved by Zworski in even dimensions.
\end{abstract}

\maketitle
\section{Introduction}

The purpose of this paper is to prove a sharp polynomial lower bound on 
a resonance-counting function for certain compactly supported 
perturbations of $-\Delta$ on $\Real^d$ for even dimensions 
$d$.  The operators we consider include, for example, the Laplacian on the 
exterior of a bounded obstacle in $\Real^d$
and the Laplacian for many compactly supported metric perturations of
$\Real^d$.  These lower bounds complement 
 the sharp upper bounds proved
by Vodev \cite{vodeveven, vodev2}.  
The lower bounds of this paper do not require any trapping conditions. 
In order to prove the result, we prove 
a Poisson formula for resonances in even 
dimensions, complementary to that proved by Zworski
in \cite{zworskieven}.  The Poisson formula is valid for a large class of operators which are ``black-box'' perturbations of $-\Delta$ on $\Real^d$
as defined in \cite{sj-zw}.   Here $\Delta \leq 0$ is the Laplacian on $\Real^d$.

We begin with the classical problem of obstacle scattering.
Let $P$ denote $-\Delta$ with Dirichlet, Neumann, or Robin
boundary conditions on $\Real^d \setminus \overline{\mco}$, where
$\mco \subset \Real^d$ is a bounded open set with smooth boundary.  
For $0<\arg \lambda<\pi$ set $R(\lambda)=(P-\lambda^2)^{-1}$.  Then
it is well known that if $\chi \in C_c^\infty(\Real^d)$ then
$\chi R(\lambda)\chi$ has a meromorphic continuation to $\Complex$ if
$d$ is odd and to $\Lambda$, the logarithmic cover of 
$\Complex \setminus \{ 0\}$, if $d$ is even.  If $\chi$ is chosen
to be $1$ on an open set containing $\overline{\mco}$, the location of the 
poles of $\chi R(\lambda) \chi$ is independent of the choice of $\chi$.
We denote the set of all such poles, repeated according to multiplicity,
by $\mcr$.

We can describe a point  $\lambda \in \Lambda$ by its norm $|\lambda|$ 
and argument $\arg \lambda.$  In even dimensions
 we use the resonance counting functions defined as 
$$n_k(r)\defeq\# \{ \lambda_j\in \mcr:\; |\lambda_j|<r\; \text{and $ k\pi<\arg 
\lambda_j<(k+1)\pi$}\}, \; \text{for}\; k\in \Integers.$$
By the symmetry of resonances for self-adjoint operators 
(cf. \cite[Proposition 2.1]{ch-hi4}),
 $n_{-k}(r)=n_k(r)$.

\begin{thm}\label{thm:lowerbd}
Let $d$ be even and let $\mco \subset \Real^d$ be a bounded open set with 
smooth compact boundary $\partial \mco$. 
Assume in addition that $\mco \not = \emptyset$
and $\Real^d \setminus \overline{\mco}$ is 
connected.  Let $P$ be $-\Delta$ on 
$\Real^d \setminus \overline{\mco}$ with Dirichlet, Neumann,
 or Robin boundary conditions.
Then there is a constant $C_0>0$ so that 
\begin{equation}
\label{eq:uplowbd}
r^d/C_0\leq n_{-1}(r) \leq C_0 r^d\; \text{when $r\gg 1$}.
\end{equation}
\end{thm}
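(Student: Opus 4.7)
The upper bound $n_{-1}(r) \leq C_0 r^d$ is Vodev's result \cite{vodeveven,vodev2}, so the real content is the matching lower bound. My plan is to extract it from the even-dimensional Poisson formula for resonances (the complementary formula promised in the abstract) by a Tauberian argument in the spirit of Sj\"ostrand--Zworski, isolating the leading wave-trace singularity at $t=0$.

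First I would form an exponential sum
\[
u(t) \defeq \sum_{\lambda_j \in \mcr,\; -\pi<\arg\lambda_j<0} e^{-i\lambda_j t},
\]
interpreted distributionally, and identify it via the complementary Poisson formula with (a piece of) the regularized relative wave trace $\tr\bigl(\cos(t\sqrt{P}) - \cos(t\sqrt{-\Delta})\bigr)$ associated to the $(-\pi,0)$ sector of $\Lambda$. The key calculation is the leading singularity of this trace at $t=0$: for $P=-\Delta$ on $\Real^d\setminus\overline{\mco}$ with any of the stated boundary conditions, this singularity is of Weyl type with leading coefficient a nonzero multiple of $\Vol(\mco)$, which is precisely where the hypothesis $\mco\neq\emptyset$ enters. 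The symmetry $n_{-k}(r)=n_k(r)$ from \cite{ch-hi4} should allow me to conclude that a nonzero fraction of this singularity is captured by $u$.

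Next I would pair $u$ with a rescaled test function $\rho_r(t) = r\rho(rt)$, where $\rho\in C_c^\infty(\Real)$ is nonnegative, supported in a small neighborhood of $0$ chosen so that no nonzero length of a closed broken geodesic on $\Real^d\setminus\overline{\mco}$ lies in $\supp\rho$, and so that $\hat\rho$ has controlled support and does not vanish on a suitable sector. Pairing against the singularity then gives $\langle u,\rho_r\rangle \sim c\, r^d$ with $c\neq 0$, while
\[
\bigl|\langle u,\rho_r\rangle\bigr| = \Bigl|\sum_j \hat\rho(-\lambda_j/r)\Bigr| \leq C\, n_{-1}(C'r),
\]
where the tail is handled using decay of $\hat\rho$ together with the Vodev upper bound. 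Comparing the two sides produces the required bound $n_{-1}(r) \geq r^d/C_0$.

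The main obstacle I expect is step two: on the logarithmic cover $\Lambda$ the Poisson sum is not absolutely convergent, so separating the contribution of the $(-\pi,0)$ sector from those of the other sheets, and from the absolutely continuous spectral data on the physical sheet, requires a careful contour deformation. This is exactly the role of the complementary Poisson formula: Zworski's formula \cite{zworskieven} captures half of the sectors of $\Lambda$ via the wave trace, and the new complementary formula must handle the remaining sectors in a way compatible with the sectorial decomposition of $t=0$ singularities. A secondary technical point is to verify that the Weyl-type singularity survives the sectorial restriction with a nonzero coefficient; once that coefficient is in hand, the Tauberian step is routine.
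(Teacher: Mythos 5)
Your overall strategy matches the paper's: the lower bound indeed comes from a complementary even-dimensional Poisson formula plus the coefficient $\propto\Vol(\mco)$ of the $|D_t|^{d-1}\delta_0(t)$ singularity of the wave trace at $t=0$, fed into the Tauberian machinery of Sj\"ostrand--Zworski. The "separation of the $(-\pi,0)$ sector" you worry about is accomplished by taking $f(\lambda)=\det S(\lambda)/\det S(\lambda e^{i\pi})$, writing $f$ as a ratio of canonical products times $e^{g}$ on a fixed sector of $\Lambda$, and combining the Fourier transform of $f'/f$ with the Birman--Krein formula. Also, the appeal to $n_{-k}=n_k$ to recover a "nonzero fraction" of the singularity is not needed: the formula directly isolates $\Lambda_{-1}$.

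The genuine gap is that after this separation the Poisson formula carries a remainder of the form
$$-\frac{1}{2\pi i}\int_0^\infty\Bigl(e^{-i\lambda|t|}\,\frac{s'(\lambda e^{i\pi})}{s(\lambda e^{i\pi})}+e^{i\lambda|t|}\,\frac{s'(\lambda e^{-i\pi})}{s(\lambda e^{-i\pi})}\Bigr)\,d\lambda,$$
which, unlike Zworski's remainder, is \emph{not} smooth away from $t=0$ and must be controlled quantitatively near $t=0$. Your sketch has no plan for this, and without it the Tauberian step fails: pairing with $\phi_\gamma$, the main singularity gives $\gtrsim\gamma^{-d}$, but one needs the remainder to contribute only $O(\gamma^{-(d-1)})$ or better, and this is not free. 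In the paper this is precisely what is supplied by the scattering-matrix estimates: a trace-norm bound $\|S(\tau)-I\|_{\tr}\leq C(1+\tau)^{d-1}$ for real $\tau$ (which itself uses the Burq and Cardoso--Vodev resolvent decay $\|\chi R(\tau)\chi\|\leq C/\tau$, not merely singular-value estimates), and in turn the bounds $1\leq|\det S(\tau e^{i\pi})|\leq Ce^{C\tau^{d-1}}$ and $|\arg\det S(\tau e^{i\pi})-\arg\det S(e^{i\pi})|\leq C(1+\tau^{d-1})$. These imply the remainder pairs with $\phi_\gamma$ as $O(\gamma^{-(d-1)})$, which is subdominant. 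This chain --- the resolvent bound on the real axis, the polynomial control of $\log\det S$ on the $e^{\pm i\pi}$ rays, and the resulting estimate on the remainder --- is the key technical content that your proposal omits, and it is also where the hypothesis that $\Real^d\setminus\overline{\mco}$ be nontrapping-free in a suitable sense (here via Burq's estimate) enters. Note as well that these bounds are specific to even dimensions: in odd dimensions the argument of $\det S(\tau e^{i\pi})$ can itself grow like $\tau^d$, which is why this particular remainder estimate would fail there.
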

The upper bound $n_{-1}(r)\leq C_0 r^d$, $r\gg 1$,
 is due to Vodev \cite{vodeveven, vodev2}.  It is the 
lower bound which is new here.  For the Robin boundary condition, 
a function $v$ in the domain of $P$ must satisfy 
$\partial _\nu v =\gamma v$ on $\partial(\Real^d\setminus \overline{\mco})$,
where $\partial_\nu$ is the inward unit normal vector field and 
$\gamma \in C^\infty (\partial\mco;\Real)$ is a fixed function. 
Choosing $\gamma\equiv 0$ gives the Neumann boundary condition. 
The condition that $\Real^d \setminus \overline{\mco}$
be connected is not  necessary, but makes the proof cleaner since
we do not have infinitely many positive eigenvalues of $P$
as we would if $\Real^d \setminus \overline{\mco}$ had a bounded 
component. 

In {\em odd} dimension $d$, where $\mcr \subset \Complex$,
 we introduce the counting funcion
$$n_{odd}(r)=\# \{ \lambda_j\in \mcr: |\lambda_j|<r\}.$$
In  odd dimension $d\geq 3$, for obstacle scattering the 
analogous upper 
bound, $n_{odd}(r)\leq Cr^d$ for $r\gg 1$ of (\ref{eq:uplowbd}) is 
due to Melrose \cite{melroseobstacle}, but the known 
lower bound for general obstacles $\mco$ is  weaker: $n_{odd}(r) \geq c r^{d-1}$
for some $c>0$ and for sufficiently large $r$ \cite{beale,l-pdm}.

In either even or odd dimensions, Stefanov \cite[Section 4]{stefanovqm}
proved lower bounds on $n_{odd}(r)$ and $n_{-1}(r)$ 
proportional to $r^d$ under certain trapping assumptions on the geometry of 
$\Real^d \setminus \mco$.  On the other hand, again in either even or odd
dimensions, 
 for a
class of strictly convex $\mco$  asymptotics
of the number of resonances (of order $r^{d-1}$)
in certain regions are known, \cite{jin, sj-zwconvex}.  
In  odd dimensions  asymptotics of the resonance
counting function have been proved in the special case of $\mco$ equal to a ball
\cite{stefanov,zwrp}.

The primary result of \cite{chobstacle} is that in even dimension $d$, 
 under the hypotheses of Theorem \ref{thm:lowerbd} (with some
additional restrictions for the Robin boundary condition), if $k\in \Integers
 \setminus  \{ 0\}$, then
$\lim\sup_{r\rightarrow \infty}\log n_k(r)/\log r =d$.  
For $k=-1$  this is weaker than the lower bound of 
Theorem \ref{thm:lowerbd}.  Moreover, the proofs in 
\cite{chobstacle} use results of \cite{beale,l-p} for the particular
case of the operator $P$ of Theorem \ref{thm:lowerbd}, and those 
results do not obviously generalize to the setting of 
Theorem \ref{thm:lowerbd2}.  However, the techniques of this 
paper do not seem to give results for $n_k(r)$, $k\not = \pm 1$.

Obstacle scattering, as considered in Theorem \ref{thm:lowerbd},
forms a canonical class of scattering problems.  However, our 
result and its proof can easily be extended to a larger class of operators.
 In even dimension $d$, for the operator $P$ 
defined below, and for the much larger class of operators of the 
``black-box'' type of \cite{sj-zw}, the resolvent $(P-\lambda^2)^{-1}$
has a meromorphic continuation to $\Lambda$, \cite{sj-zw}.  Hence
 one can define resonances, the set $\mcr$,  and the 
resonance counting functions $n_k(r)$ just as for the Laplacian on the
exterior of a compact set.
\begin{thm}\label{thm:lowerbd2} Let $d$ be even, and let $(M,g)$ be a 
smooth, connected, noncompact $d$-dimensional
Riemannian manifold, perhaps with 
 smooth compact boundary $\partial M$.  Suppose
there is a compact set $K\subset M$ and an $R_0>0$ so that $M\setminus K$
is diffeomorphic to $\Real^d\setminus \overline{B}(0;R_0)$, and 
that $g$ restricted to $M\setminus K$ agrees with the flat metric on $\Real^d$.
Then let $P=-\Delta_g $ on $M$, with Dirichlet or Robin boundary conditions
if $\partial M \not = \emptyset$.  In the special case of 
$M=\Real^d \setminus \mco$ with $\mco$ as in Theorem \ref{thm:lowerbd}
and metric agreeing with the Euclidean metric outside a compact set,
we may choose $P=-\Delta_g +V$ with Dirichlet or Neumann boundary
conditions, for some $V\in C_c^\infty(\Real^d \setminus \mco;\Real)$.  Then if 
$\vol (K) \not = \vol (B(0;R_0))$, then there is a constant $C_0>0$ so
that 
$$r^d/C_0 \leq n_{-1}(r) \leq C_0 r^d\; \text{for }\; r\gg 1.$$
\end{thm}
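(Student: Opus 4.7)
\emph{Overall strategy.} The plan is to apply the complementary Poisson formula for resonances in the sector $k=-1$ that this paper establishes for black-box perturbations. The hypothesis $\vol(K) \neq \vol(B(0;R_0))$ will force a nonzero leading singularity at $t=0$ in the associated wave-trace distribution, which, together with Vodev's upper bound $n_{-1}(r) \leq C_0 r^d$, yields the matching polynomial lower bound.

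\emph{Step 1: Poisson formula.} Since $P$ fits the black-box framework of \cite{sj-zw} with exterior Euclidean structure, the complementary Poisson formula of this paper applies. It expresses a regularized wave trace $u \in \mcd'(\Real)$, built from $\cos(t\sqrt{P})$ minus a free-space reference, as the distributional sum $2\sum_{\lambda_j \in \mcr,\, -\pi<\arg\lambda_j<0} e^{-it\lambda_j}$ over the resonances counted by $n_{-1}$, up to explicit contributions from any $L^2$-eigenvalues of $P$ and from the continuous spectrum at $0$.

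\emph{Step 2: Leading singularity at $t=0$.} The short-time behavior of $u$ is governed by the heat-kernel (or Weyl-type) expansion of $P$ relative to the free Laplacian on $\Real^d$. For even $d$, the leading singular coefficient at $t=0$ is a universal nonzero multiple of $\vol(K) - \vol(B(0;R_0))$; the hypothesis of the theorem ensures this coefficient does not vanish. In the special case with a compactly supported potential $V$ added, $V$ only contributes to lower-order heat invariants, so the leading term is unchanged.

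\emph{Step 3: Lower bound and main obstacle.} Introduce an entire function $s(\lambda)$ on $\Lambda$ (a regularized scattering or Fredholm determinant) whose zeros, counted with multiplicity, are precisely $\mcr$. Vodev's bound caps the growth of $s$ in the sector $k=-1$ at order $d$. Pairing the Poisson formula against test functions concentrated at $t=0$ shows, via the nonvanishing singular coefficient of Step~2, that $s$ must actually attain growth of order $d$ in some direction of this sector; a Jensen-type convexity estimate then produces $n_{-1}(r) \geq r^d/C_0$ for $r \gg 1$. The delicate point is Step~2: verifying in the full black-box setting, uniformly across Dirichlet, Neumann, and Robin conditions and in the presence of a metric perturbation or potential, that the leading singular coefficient of the relative wave trace at $t=0$ really is proportional to $\vol(K) - \vol(B(0;R_0))$. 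This is the technical core that the volume hypothesis is tailor-made to unlock; everything afterward follows by the same determinant/complex-analytic machinery used for Theorem~\ref{thm:lowerbd}.
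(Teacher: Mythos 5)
The overall architecture you propose---Poisson formula, nonzero leading singularity at $t=0$ from the volume hypothesis, then a Jensen-type argument against Vodev's upper bound---is indeed the route the paper takes (it packages the last step as an application of Proposition~4.2 of \cite{sj-zwlbII} rather than working directly with a determinant). However, Step~1 contains a genuine gap that is not cosmetic. You describe the even-dimensional Poisson formula as a sum over resonances in $\Lambda_{-1}$ ``up to explicit contributions from any $L^2$-eigenvalues of $P$ and from the continuous spectrum at $0$.'' That is the shape of the \emph{odd}-dimensional formula. In even dimensions, the complementary Poisson formula of Theorem~\ref{thm:poissonformula} carries an additional remainder
$$-\frac{1}{2\pi i}\int_0^\infty\Bigl(e^{-i\lambda |t|}\,\frac{s'(\lambda e^{i\pi})}{s(\lambda e^{i\pi})}+e^{i\lambda|t|}\,\frac{s'(\lambda e^{-i\pi})}{s(\lambda e^{-i\pi})}\Bigr)\,d\lambda ,$$
coming from the logarithmic derivative of the scattering determinant evaluated on the nonphysical sheet $\arg\lambda=\pm\pi$. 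This term is not an eigenvalue contribution, is not the $m(0)$ term, and is not automatically harmless: without an a priori bound on it, pairing $u$ against $\phi_\gamma$ gives you nothing about $n_{-1}$.

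Controlling that integral is the technical core of the proof and is precisely why the paper needs Section~\ref{s:prelimestimates}. Under the resolvent estimate \eqref{eq:resolventdecay2} (which for the operators of Theorem~\ref{thm:lowerbd2} is supplied by Burq \cite{burq} and Cardoso--Vodev \cite{c-v}), Lemma~\ref{l:realbd} gives $\|I-S(\tau)\|_{\tr}=O(\tau^{d-1})$ on the real axis, and Lemma~\ref{l:argchange} upgrades this to $1\le|\det S(\tau e^{i\pi})|\le Ce^{C\tau^{d-1}}$ together with a bound on the variation of $\arg\det S(\tau e^{i\pi})$. Lemma~\ref{l:rescontisbig} then shows the pairing of this integral term with $\phi_\gamma$ is $O(\gamma^{-(d-1)})$, one power below the $\gamma^{-d}$ produced by the leading singularity of Lemma~\ref{l:singnear0}, which is what makes the comparison close. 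Your proposal neither records the remainder nor invokes the resolvent-decay input that bounds it, so as written the argument does not yield the lower bound. The rest of your outline (the heat/wave expansion giving $\alpha$ proportional to $\vol(K)-\vol(B(0;R_0))$ via \cite{hosf,ivrii,melrose,ho4}, and the irrelevance of $V$ to the leading coefficient) is consistent with the paper.
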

In the statement of the theorem, $B(a;R)=\{ x\in \Real^d: \; |x-a|<R\}$,
and $\Delta_g\leq 0$ is the Laplacian on $(M,g)$. 
The operators $P$ 
defined in Theorem \ref{thm:lowerbd2} are examples of ``black box'' operators
as defined by Sj\"ostrand-Zworski \cite{sj-zw}, as recalled in 
Section \ref{ss:setup}.

Again, it is the lower bound of Theorem \ref{thm:lowerbd2}
which is new, as the upper bound is due to 
\cite{vodeveven, vodev2}.
 We shall  prove a more general
result, Theorem \ref{thm:lowerbd3}, from which Theorems 
\ref{thm:lowerbd} and \ref{thm:lowerbd2}
follow.

Tang \cite{tang} showed that for non-flat, compactly 
supported perturbations of the Euclidean metric on $\Real^d$, $d=4,\; 6$, 
the associated Laplacian has infinitely many resonances.  Under
certain geometric conditions one can prove the 
existence of many resonances for operators of the type
considered in Theorem \ref{thm:lowerbd2}.  In addition to 
the references already cited, we mention \cite{popov, zworskieven}
and references therein.

The proof of the lower bound of Theorems
\ref{thm:lowerbd} and \ref{thm:lowerbd2} uses
the wave trace, 
 informally given by 
\begin{equation}\label{eq:wavetraceinformal}
u(t) = 2\tr(\cos (t\sqrt{P})-\cos (t\sqrt{-\Delta})),
\end{equation}
and formally defined in (\ref{eq:wavetraceformal}).  For the 
operators of Theorem \ref{thm:lowerbd2}, because
we are in even dimension, the leading order singularity
of $u$
at  $0$ ``spreads out.''  That this, when
combined with a Poisson-type formula, can give good lower bounds
on similar resonance-counting functions was
proved in \cite{sj-zwlbII} and has been used, for example, in 
\cite{b-c-h-p,gu-zw,sj-zwlbII}.  In particular,
\cite{b-c-h-p,gu-zw} prove lower bounds analogous to (\ref{eq:uplowbd})
for certain even-dimensional manifolds hyperbolic near infinity.

 To use the result of \cite{sj-zwlbII}
requires a Poisson formula for resonances which is valid in any 
sufficiently small deleted neighborhood of $t=0$.  
Thus one of the main results of this paper is Theorem \ref{thm:poissonformula},
a Poisson formula for resonances
 in even dimensions. This result holds for a large class of 
``black-box'' perturbations (in the sense of \cite{sj-zw})
of $-\Delta$ on $\Real^d$, $d$ even.   Our result is complementary
to the results of  \cite{zworskieven}. 
See Section \ref{s:comparison} for  further discussion and references for
Poisson formulae in both even and odd dimensions.

We comment briefly on the structure of the paper.  Section 
\ref{s:prelimestimates} proves some bounds on the scattering
matrix which are needed later in the proof to control a term 
appearing in the Poisson formula.  In Section \ref{s:PF} we state
and prove the Poisson formula, Theorem \ref{thm:poissonformula}.  
Theorem \ref{thm:lowerbd3} is a more 
general version of the lower bound than Theorem \ref{thm:lowerbd2}.  In 
Section \ref{s:proofofthm} we prove Theorem \ref{thm:lowerbd3}, using
the Poisson formula and results from \cite{sj-zwlbII}
and Section \ref{s:prelimestimates}.  We finish 
the proof of Theorems \ref{thm:lowerbd}  and \ref{thm:lowerbd2} by using
results on the singularity at $0$ of $u(t)$, e.g. \cite{hosf,ivrii, melrose},
and estimates on the cut-off resolvent on the positive real axis due
to Burq \cite{burq} and Cardoso-Vodev \cite{c-v}.

\vspace{2mm}
\noindent
{\bf Acknowledgements.}  It is a pleasure to thank Maciej Zworski for helpful
conversations, one of which inspired this paper, and for comments on an earlier
version of the manuscript.

\subsection{Set-up and notation} \label{ss:setup}

We briefly introduce some notation which we shall use throughout the paper.

We recall the black-box perturbations of \cite{sj-zw}, using notation
as in that paper.  Let $\mch$ be a complex Hilbert space with orthogonal
decomposition
$$\mch= \mch_{R_0}\oplus L^2(\Real^d\setminus B(0;R_0))$$
where $B(0;R_0)\subset \Real^d$ is the ball of radius $R_0$ 
centered at the orgin.  Let ${\bbbone}_{\Real^n\setminus B(0;R_0)}
:\mch \rightarrow L^2(\Real^d\setminus B(0;R_0))$ denote orthogonal
projection.   Let $P:\mch\rightarrow \mch$ be a linear 
self-adjoint operator  with domain $\mcd \subset \mch$.
We assume ${\bbbone}_{\Real^n\setminus B(0;R_0)} \mcd = H^2 (\Real^d\setminus B(0;R_0))$ and ${\bbbone }_{\Real^n\setminus B(0;R_0)} P = -\Delta 
\restrict_{\Real^n\setminus B(0;R_0)}$.  Moreover,
$P$ is lower semi-bounded  and  there is a $k_0\in \Natural$ so that 
${\bbbone}_{\Real^n\setminus B(0;R_0)} (P+i)^{-k_0}$ is trace class.

Under these assumptions on $P$, we may carefully define the wave trace
given informally by (\ref{eq:wavetraceinformal}).  Thus
\begin{multline}\label{eq:wavetraceformal}
u(t) \defeq 2 \left[ \tr_{\mch}\left( \cos (t\sqrt{P}) - {\bbbone}_{\Real^n\setminus B(0;R_0)} 
\cos (t \sqrt{-\Delta}){\bbbone}_{\Real^n\setminus B(0;R_0)} 
\right)  \right. \\ \left. - \tr_{L^2(\Real^d)} \left( {\bbbone}_{ B(0;R_0)} 
\cos (t \sqrt{-\Delta}){\bbbone}_{B(0;R_0)}
\right)\right].
\end{multline}
The factor of $2$ is included to be consistent with 
the definition of $u$ given via the wave group; see, for example,
\cite{melrosetrace,zworskieven}.

Let $P$ be a black-box perturbation of $-\Delta$ on $\Real^d$, and,
for $0<\arg \lambda<\pi$, set $R(\lambda)=(P-\lambda^2)^{-1}$. 
We denote by $R(\lambda)$ the meromorphic continuation to $\Lambda$, 
the logarithmic cover of $\Complex \setminus \{0\}$, if
$d$ is even.  If $d$ is odd, the continuation is to $\Complex$, and again
we denote it $R(\lambda)$.
We use 
$\mcr$ to denote all the  poles of continuation of the
resolvent $R(\lambda)$, repeated according to multiplicity.  We explicitly
include both those poles corresponding to eigenvalues and those which
do not.  

A point $\lambda\in \Lambda $ can be identified by specifying both its
norm $|\lambda|$ and argument $\arg \lambda$ where we do not 
identify points in $\Lambda$ whose arguments differ by nonzero integral
multiples of $2\pi$.  For $\lambda \in \Lambda$, we denote
$\overline{\lambda}= \overline{ |\lambda |e^{i\arg \lambda}}= 
|\lambda|e^{-i\arg 
\lambda}$.
For $k\in \Natural$, set 
$$\Lambda_k=\{ \lambda \in \Lambda: k\pi< \arg \lambda< (k+1)\pi\}.$$

Throughout this paper, $C$ denotes a positive constant, the value of 
which may change from line to line without comment.

\section{Preliminary estimates on the scattering matrix and related quantities}
\label{s:prelimestimates}

We shall need some estimates on the determinant of the scattering matrix
and on $\|S(\lambda)-I\|_{\tr}$.  The proof uses  a representation for
the scattering matrix from \cite{pe-zwsc} which we recall for the 
reader's convenience.  We remark that there are a number of
related representations in the literature; see, for 
example, \cite[Section 2]{pe-st} or \cite[Section 3]{zworskieven}.
\begin{prop}(\cite[Proposition 2.1]{pe-zwsc})\label{p:pe-zw}   
For $\phi\in C_c^{\infty}(\Real^d)$,
 let us denote by 
\begin{equation*}
{\mathbb E}^{\phi}_{\pm}(\lambda) :L^2(\Real^d)\rightarrow L^2(\Sphere^{d-1})
\end{equation*}
the operator 
\begin{equation}\label{eq:edef}
\left( {\mathbb E}^{\phi}_{\pm}(\lambda)f\right)(\theta)=\int_{\Real^d}
f(x)   \phi(x) \exp(\pm i \lambda \langle x,\omega \rangle ) dx .
\end{equation}  Let 
 $\chi_i\in C_c^{\infty}(\Real^d)$, $i=1,\; 2,\;3$ be
 such that $\chi_i\equiv 1$ near $\overline{B}(0;R_0)$
and $\chi_{i+1}\equiv 1$ on $\supp \chi_i$.  

Then for $0<\arg \lambda <\pi$,  $S(\lambda)= I+A(\lambda)$, where 
$$A(\lambda) = i \pi (2\pi)^{-d} \lambda^{d-2} {\mathbb E}^{\chi _3}_+ (\lambda) [\Delta, \chi_1]
R(\lambda) [\Delta, \chi_2] ^t{\mathbb E}^{\chi_3}_-(\lambda)$$
where $^t{\mathbb E}$ denotes the transpose of ${\mathbb E}$.  The identity
holds for $\lambda \in \Lambda$ by analytic continuation.
\end{prop}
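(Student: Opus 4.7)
The plan is to obtain the formula by constructing a distorted plane wave adapted to the black-box structure, reading off the scattering amplitude from its far-field asymptotics, and then rearranging the resulting expression into the symmetric sandwich form of the proposition. I would first define the outgoing distorted plane wave
\[
E_+(\lambda,y,\omega) \defeq (1-\chi_2(y))e^{i\lambda\langle y,\omega\rangle} - R(\lambda)\bigl([\Delta,\chi_2]e^{i\lambda\langle\cdot,\omega\rangle}\bigr)(y).
\]
Because $\chi_2\equiv 1$ near $\overline{B(0,R_0)}$, the first summand lies in $\mch$ (supported in the Euclidean region), and $[\Delta,\chi_2]e^{i\lambda\langle\cdot,\omega\rangle}$ is compactly supported away from $\overline{B(0,R_0)}$, so $R(\lambda)$ acts meromorphically on it. A short calculation using $P=-\Delta$ on $\supp(1-\chi_2)$ verifies $(P-\lambda^2)E_+(\cdot,\omega,\lambda)=0$.

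Next I would analyze the far field of $E_+$ via the transfer identity
\[
(1-\chi_3)R(\lambda)g = R_0(\lambda)\bigl([\Delta,\chi_3]R(\lambda)g\bigr),
\]
valid whenever $\supp g\subset\{\chi_3\equiv 1\}$; it follows by applying $(-\Delta-\lambda^2)$ on the Euclidean exterior (where $P=-\Delta$) and using uniqueness of the outgoing free resolvent. Applied with $g=[\Delta,\chi_2]e^{i\lambda\langle\cdot,\omega\rangle}$ (whose support sits in $\supp\chi_2\subset\{\chi_3\equiv 1\}$), it replaces $R(\lambda)$ by $R_0(\lambda)$ beyond $\supp\chi_3$. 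Combined with the classical Hankel-based far-field expansion
\[
R_0(\lambda)f(y)\sim c_d\,\lambda^{(d-3)/2}\frac{e^{i\lambda|y|}}{|y|^{(d-1)/2}}\int e^{-i\lambda\langle z,\theta\rangle}f(z)\,dz,\qquad \theta=y/|y|,
\]
with explicit $c_d$, this gives the outgoing spherical-wave coefficient of the scattered part of $E_+$. The remaining summand $(1-\chi_2)e^{i\lambda\langle y,\omega\rangle}$ decomposes distributionally via stationary phase into incoming and outgoing spherical waves at $\mp\omega$, and comparing against the standard definition of $S(\lambda)$ identifies the kernel of $S(\lambda)-I$ (with the appropriate antipodal/sign conventions).

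The final step is a commutator rearrangement to match the symmetric form in the statement. The expression produced directly by the far-field extraction is proportional to $\lambda^{d-2}\int e^{\pm i\lambda\langle z,\theta\rangle}[\Delta,\chi_3]R(\lambda)[\Delta,\chi_2]e^{\mp i\lambda\langle z,\omega\rangle}\,dz$. Using the transpose identity $\int f[\Delta,\chi]g = -\int [\Delta,\chi]f\cdot g$ (valid for real $\chi$), the identity $[\Delta,\chi]e^{\pm i\lambda\langle\cdot,\xi\rangle}=(\Delta+\lambda^2)(\chi e^{\pm i\lambda\langle\cdot,\xi\rangle})$, and the cutoff hierarchy $\supp\chi_1\subset\{\chi_2\equiv 1\}\subset\{\chi_3\equiv 1\}$, I would replace the outer $[\Delta,\chi_3]$ with the bare cutoff $\chi_3$ absorbed into $\mathbb{E}^{\chi_3}_\pm$, while installing $[\Delta,\chi_1]$ in its place on the resolvent side. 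Tracking the prefactor through the Hankel asymptotic and the plane-wave stationary-phase normalization yields the precise constant $i\pi(2\pi)^{-d}$, and meromorphy of both sides in $\lambda\in\Lambda$ extends the identity from $\{0<\arg\lambda<\pi\}$ to all of $\Lambda$. The hardest parts will be the constant-tracking (Hankel asymptotic together with the stationary-phase decomposition of the plane wave) and the commutator bookkeeping that transfers the outer $[\Delta,\chi_3]$ into the cutoff $\chi_3$ of $\mathbb{E}^{\chi_3}_\pm$ while producing a $[\Delta,\chi_1]$ on the resolvent side, since the integration by parts involved must be carried out in regions where $R(\lambda)\cdot$ is smooth, i.e., strictly away from $\overline{B(0,R_0)}$, so that the three-tier cutoff hierarchy is essential.
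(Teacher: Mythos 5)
The paper does not prove this proposition: it is quoted verbatim from Petkov--Zworski \cite[Proposition 2.1]{pe-zwsc}, so there is no ``paper's proof'' to compare against. Your reconstruction via distorted plane waves, the transfer identity to the free resolvent, and Hankel/stationary-phase far-field asymptotics is a legitimate standard route, and it correctly produces the overall shape $\lambda^{d-2}\mathbb{E}^{\chi}_{\pm}[\Delta,\cdot]R(\lambda)[\Delta,\cdot]\,{}^t\mathbb{E}^{\chi}_{\mp}$. One structural remark is that the final ``commutator rearrangement'' you describe --- transposing $[\Delta,\chi_3]$ off the plane wave, rewriting it as $(\Delta+\lambda^2)(\chi_3 e^{\pm i\lambda\langle\cdot,\cdot\rangle})$, and trying to land on $\chi_3\cdot[\Delta,\chi_1]$ --- is a needless and delicate detour: the second integration by parts would have you pairing $\chi_3 e^{\pm i\lambda\langle\cdot,\theta\rangle}$ with $(\Delta+\lambda^2)R(\lambda)g$, which is not meaningful on $\supp\chi_3\cap \overline{B}(0,R_0)$ where $R(\lambda)g$ lives in the abstract black-box space rather than in $L^2$ of a Euclidean region, so you would be forced into a Green's-formula argument on an annulus to justify it.

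The cleaner route, which avoids the rearrangement altogether, is to apply the transfer identity with $\chi_1$ instead of $\chi_3$. Since $\chi_2\equiv 1$ on $\supp\chi_1$, one has $\supp\nabla\chi_2\cap\supp\chi_1=\emptyset$, so for $g=[\Delta,\chi_2]e^{i\lambda\langle\cdot,\omega\rangle}$ the identity reads $(1-\chi_1)R(\lambda)g = R_0(\lambda)g + R_0(\lambda)[\Delta,\chi_1]R(\lambda)g$, with an extra free-resolvent term because $\chi_1 g=0$ rather than $(1-\chi_1)g=0$. For $\Im\lambda>0$ one checks directly that $R_0(\lambda)g = -\chi_2 e^{i\lambda\langle\cdot,\omega\rangle}$ (both sides are $L^2$ and solve the same equation), and this precisely cancels the $-\chi_2 e^{i\lambda\langle\cdot,\omega\rangle}$ hiding in your $(1-\chi_2)e^{i\lambda\langle\cdot,\omega\rangle}$, leaving the incident wave $e^{i\lambda\langle y,\omega\rangle}$ plus the scattered part $-R_0(\lambda)[\Delta,\chi_1]R(\lambda)[\Delta,\chi_2]e^{i\lambda\langle\cdot,\omega\rangle}$. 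The $[\Delta,\chi_1]$ then appears directly, the $\chi_3$'s in $\mathbb{E}^{\chi_3}_\pm$ are harmless since $\chi_3\equiv 1$ on $\supp\nabla\chi_1\cup\supp\nabla\chi_2$, and what remains is only the Hankel constant and the antipodal/sign bookkeeping you flagged. If you keep your version, you owe a justification of the second integration by parts in a region where $R(\lambda)g$ is a genuine function, i.e.\ outside $\overline{B}(0,R_0)$, which is exactly where the three-tier cutoff hierarchy and an explicit Green's identity would have to be invoked; as written this step is the genuine gap in your sketch.
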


Maciej Zworski suggested the proof of the following lemma.  This 
follows techniques of \cite{melroseobstacle,vodevsb, zwspb}.  
We note that results of Burq \cite{burq} and Cardoso-Vodev \cite{c-v}
show that there
are a large class of examples of black-box operators $P$ for which
(\ref{eq:resolventdecay}) holds.
\begin{lemma} \label{l:realbd}
Let the dimension $d\geq 2$ be even or odd, and let $P$ denote
an operator satisfying the general black-box conditions of \cite{sj-zw}
recalled in Section \ref{ss:setup}.  Assume that there is 
an $R_1>R_0$ so that for all $b>a>R_1$, if $\chi \in C_c^\infty(\Real^d)$
has support in $\{x\in \Real^d : a<|x|<b\}$ then there is a constant
$C$ depending on $P$ and $\chi$ so that 
\begin{equation} \label{eq:resolventdecay}
\| \chi R(\tau) \chi\| \leq C/\tau\; \text{ for $\tau
\in  (1,\infty)$ (that is,  $\arg \tau=0$)}.
\end{equation}
Let $S(\lambda)$ denote the  scattering matrix for $P$, unitary when 
$\arg \lambda =0$.  Then, for $\tau\in (1,\infty)$, 
there is a constant $C$ so 
that $\| I-S(\tau) \|_{\tr} \leq C (1+ \tau)^{d-1}$ where $\| \cdot \|_{\tr}$ 
is the trace class norm.
\end{lemma}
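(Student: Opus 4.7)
I would use the explicit representation in Proposition \ref{p:pe-zw} to write
\[
S(\tau) - I = i\pi(2\pi)^{-d}\tau^{d-2}\, T_1(\tau)\, R(\tau)\, T_2(\tau),
\]
where $T_1(\tau) := {\mathbb E}^{\chi_3}_+(\tau)[\Delta, \chi_1]$ and $T_2(\tau) := [\Delta, \chi_2]\, {}^t{\mathbb E}^{\chi_3}_-(\tau)$, and then estimate the trace norm by sandwiching $R(\tau)$ between two Hilbert--Schmidt operators, applying (\ref{eq:resolventdecay}) to control $R(\tau)$ itself in operator norm.

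First I would fix the cutoffs $\chi_1, \chi_2, \chi_3$ more precisely than in Proposition \ref{p:pe-zw}: take $\chi_1 \in C_c^\infty(\Real^d)$ identically $1$ on a neighborhood of $\overline{B}(0;R_1+1)$, and then $\chi_2, \chi_3$ each identically $1$ on the support of the previous, so that the supports of $\nabla\chi_j$ and $\Delta\chi_j$ all lie in annular regions contained in $\{|x| > R_1\}$, where hypothesis (\ref{eq:resolventdecay}) is available.

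Next I would compute the kernel of $T_1(\tau)$ by writing $[\Delta,\chi_1] = (\Delta\chi_1) + 2\nabla\chi_1\cdot\nabla$ and integrating the derivative term by parts once. Because $\chi_3 \equiv 1$ on an open neighborhood of $\supp\chi_1$, the gradient $\nabla\chi_3$ vanishes on $\supp\nabla\chi_1$, and the boundary terms collapse to give the kernel
\[
K_1(\omega, x) = -\bigl[(\Delta\chi_1)(x) + 2i\tau\, \omega\cdot\nabla\chi_1(x)\bigr]\, e^{i\tau\langle x,\omega\rangle}.
\]
Since $|e^{i\tau\langle x,\omega\rangle}| = 1$ for real $\tau$, a direct computation of $\|T_1(\tau)\|_{\mathrm{HS}}^2 = \int_{\Sphere^{d-1}}\!\int_{\Real^d} |K_1|^2\, dx\, d\omega$ yields $\|T_1(\tau)\|_{\mathrm{HS}} \leq C(1+\tau)$, and an identical computation gives $\|T_2(\tau)\|_{\mathrm{HS}} \leq C(1+\tau)$.

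Finally, because the kernels of $T_1(\tau), T_2(\tau)$ are both supported in annuli inside $\{|x| > R_1\}$, I can insert a single cutoff $\chi \in C_c^\infty(\Real^d)$ supported in a slightly larger annulus there, with $\chi \equiv 1$ on those supports, so that $T_1(\tau)R(\tau)T_2(\tau) = T_1(\tau)\chi R(\tau)\chi T_2(\tau)$. Applying the standard inequality $\|ABC\|_{\tr} \leq \|A\|_{\mathrm{HS}} \|B\|_{\mathrm{op}} \|C\|_{\mathrm{HS}}$ together with (\ref{eq:resolventdecay}) then yields
\[
\|S(\tau) - I\|_{\tr} \leq C\tau^{d-2}(1+\tau)\cdot \tau^{-1}\cdot (1+\tau) \leq C(1+\tau)^{d-1}
\]
for $\tau > 1$. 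The only delicate point is arranging the cutoffs so that both the integration-by-parts step (requiring $\chi_3 \equiv 1$ near $\supp\nabla\chi_1$) and the resolvent hypothesis (requiring the middle cutoffs supported in $\{|x|>R_1\}$) are simultaneously valid; no deeper obstacle is anticipated.
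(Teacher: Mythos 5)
Your proof is correct and takes a genuinely different, more elementary route than the paper's. Both arguments start from the Petkov--Zworski representation $S(\tau)-I = c_d\tau^{d-2}\,\mathbb{E}^{\chi_3}_+(\tau)[\Delta,\chi_1]R(\tau)[\Delta,\chi_2]{}^t\mathbb{E}^{\chi_3}_-(\tau)$, but they part ways at the point of extracting a trace-norm bound. The paper estimates individual singular values $s_j(S(\tau)-I)$, using the exponential singular-value decay $s_j({}^t\mathbb{E}^{\chi_3}_-(\tau))\leq C_1\exp(C_1\tau - j^{1/(d-1)}/C_1)$ from \cite{dy-zw,zwspb} together with the operator-norm bound $\|[\Delta,\chi_1]R(\tau)[\Delta,\chi_2]\|\leq C(1+\tau)$; since the exponential factor is large for small $j$, it then invokes unitarity of $S(\tau)$ (hence $s_j\leq 2$) to handle $j\lesssim\tau^{d-1}$ before summing the tail. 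You instead group the commutators with the $\mathbb{E}^{\chi_3}_\pm$ factors, compute the Schwartz kernels explicitly after one integration by parts, and read off Hilbert--Schmidt bounds $\|T_i(\tau)\|_{\mathrm{HS}}\leq C(1+\tau)$ directly; the inequality $\|ABC\|_{\tr}\leq\|A\|_{\mathrm{HS}}\|B\|_{\mathrm{op}}\|C\|_{\mathrm{HS}}$ then finishes it. Your route has two advantages: it avoids the sharp singular-value asymptotics for $\mathbb{E}^\chi_-$ and the unitarity trick altogether, and --- more substantively --- the resolvent appears only as $\chi R(\tau)\chi$ with a plain cutoff $\chi$, so hypothesis (\ref{eq:resolventdecay}) applies verbatim; the paper's phrase ``by (\ref{eq:resolventdecay}), $\|[\Delta,\chi_1]R(\tau)[\Delta,\chi_2]\|\leq C(1+\tau)$'' silently requires an additional elliptic-estimate step to pass derivatives through the resolvent, which your arrangement sidesteps. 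The only point worth stating a bit more carefully in a final write-up is the insertion $T_1(\tau)R(\tau)T_2(\tau)=T_1(\tau)\chi R(\tau)\chi T_2(\tau)$: you need $\chi\equiv 1$ on a \emph{neighborhood} of $\supp\nabla\chi_1\cup\supp\nabla\chi_2$ (so that $[\Delta,\chi_i]\chi=[\Delta,\chi_i]$ and $\chi[\Delta,\chi_i]=[\Delta,\chi_i]$ hold as operator identities) with $\supp\chi$ still contained in an annulus $\{a<|x|<b\}$, $a>R_1$; your choice of cutoffs, with $\chi_1\equiv 1$ near $\overline{B}(0;R_1+1)$, makes this possible.
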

\begin{proof}
We note first that this is a result about large $\tau$ behavior, as $\lim
_{\tau \downarrow 0} S(\tau)=I$, see \cite[Section 6]{chobstacle}.

  For a bounded linear operator $B$, we denote by 
$s_1(B) \geq s_2(B)\geq ...$ the singular values of $B$.

We  use  the representation of $S(\tau)$ recalled in Proposition 
\ref{p:pe-zw}.  
We have  
$$s_j(S(\tau)-I) \leq 
\| i \pi (2\pi)^{-d} \tau^{d-2} {\mathbb E}^{\chi _3}_+\|_{L^2 \rightarrow
L^2} \| [\Delta, \chi_1]
R(\tau) [\Delta, \chi_2]\| _{L^2 \rightarrow L^2}
s_j( ^t{\mathbb E}^{\chi_3}_-(\lambda) ).$$
We may choose $\chi_1,\; \chi_2$ so 
that by  (\ref{eq:resolventdecay}), 
$$\|  [\Delta, \chi_1]
R(\tau) [\Delta, \chi_2]\| _{L^2 \rightarrow L^2}  \leq C(1 + \tau).$$
Moreover, $\|  i \pi (2\pi)^{-d} \tau^{d-2} {\mathbb E}^{\chi _3}_+\|_{L^2 \rightarrow
L^2}\leq C\tau^{d-2}$ since $\tau \in (0,\infty)$.
From \cite[(3.4.14)]{dy-zw}, which follows \cite{zwspb}, 
\begin{equation*}
s_j(^t{\mathbb E}^{\chi_3}_-(\tau) ) 
\leq C_1 \exp(C_1\tau - j^{\frac{1}{d-1}}/C_1).
\end{equation*}
Summarizing, 
\begin{equation}
\label{eq:hardsjbd}s_j(S(\tau)-I) \leq C (\tau^{d-1}+1)\exp(C_1\tau - j^{\frac{1}{d-1}}/C_1).
\end{equation}
However, since for $\tau \in (0,\infty)$, $S(\tau)$ 
is unitary we have that $s_j(S(\tau)-I)\leq 2$ for all $j$.  Using this 
and
(\ref{eq:hardsjbd}), by choosing $C_2$ sufficiently large, we have
for $\tau\in (0,\infty)$
\begin{equation}
s_j(S(\tau)-I)
\leq \left\{ \begin{array}{ll}
2 & \text{if $j\leq C_2\tau^{d-1}$}\\
C j^{-\frac{d+1}{d}}& \text{if $j\geq C_2 \tau^{d-1}$.}
\end{array}
\right.
\end{equation}
Thus, for $\tau \in (1,\infty)$
\begin{align*}
\|S(\tau)-I\|_{\tr} & = \sum s_j( S(\tau)-I) \\ & 
= \sum_{j\leq C_2\tau^{d-1}} s_j( S(\tau)-I)  + \sum_{j> C_2\tau^{d-1}}
s_j( S(\tau)-I)\\ & \leq 2 C_2 \tau^{d-1} + \sum_{j\geq C_2 \tau^{d-1}}C j^{-\frac{d+1}{d}}
 \leq 2 C_2 \tau^{d-1} + C_3 .
\end{align*}
\end{proof}

Here we use Lemma \ref{l:realbd} to prove Lemma \ref{l:argchange}.
This argument  has the advantage of 
working for a large class of perturbations $P$ of $-\Delta$.  However,
for the special case of the Laplacian on the exterior of an obstacle, 
a portion of Lemma \ref{l:argchange} has been proved in \cite{chobstacle} using
instead of Lemma \ref{l:realbd}
``inside-outside duality'' results of Eckmann-Pillet 
\cite{e-p1} and Lechleiter-Peters \cite{le-pe}.  In this
special case, the remainder of Lemma \ref{l:argchange}
could be proved in a similar way.
\begin{lemma}\label{l:argchange} Let the dimension $d$ be even, and let $P$ denote
an operator satisfying the hypotheses of Lemma \ref{l:realbd} on $P$.
  Then there is a constant $C>0$ so
that for $\tau >0 $ (i.e., $\arg \tau =0$), 
$$1\leq | \det S(\tau e^{i\pi})| \leq C\exp (C \tau^{d-1})\; \text{and}\;
|\arg S(\tau e^{i\pi})-\arg S(e^{i\pi})| \leq C (\tau^{d-1}+1)$$
where the argument is chosen to depend continuously on $\tau \in (0,\infty)$.
\end{lemma}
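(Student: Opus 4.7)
The plan is to deduce all three estimates from Lemma \ref{l:realbd} by transferring the trace-class bound on $S(\tau)-I$ from the physical axis $\arg\lambda=0$ to the ray $\arg\lambda=\pi$, using the symmetries of the scattering matrix on $\Lambda$ coming from self-adjointness of $P$, together with the meromorphic structure of $R(\lambda)$ on the logarithmic cover.

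The main identity I would exploit is the following. For self-adjoint $P$ one has the Hermitian symmetry $S(\lambda)^{*} S(\overline{\lambda}) = I$ on $\Lambda$, where $\overline{\lambda} = |\lambda| e^{-i\arg\lambda}$; taking determinants and specializing to $\lambda=\tau e^{i\pi}$ gives
$$\det S(\tau e^{i\pi}) \cdot \overline{\det S(\tau e^{-i\pi})} = 1.$$
Combined with the resolvent monodromy identity in even dimensions (going once around the origin in $\Lambda$ changes $R(\lambda)$ by an operator expressible in terms of scattering data on the real axis), and substituted into the representation of Proposition \ref{p:pe-zw}, one obtains a factorization expressing $S(\tau e^{i\pi})$ as $S(\tau)^{-1}$ multiplied by a trace-class correction whose trace norm is controlled by $C\tau^{d-1}$ via Lemma \ref{l:realbd}. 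The upper bound $|\det S(\tau e^{i\pi})|\leq C\exp(C\tau^{d-1})$ then follows from $|\det S(\tau)|=1$ (unitarity on the real axis) and the standard inequality $|\log |\det (I+A)||\leq \|A\|_{\tr}$. The lower bound $|\det S(\tau e^{i\pi})|\geq 1$ reduces, by the Hermitian symmetry above, to $|\det S(\tau e^{-i\pi})|\leq 1$, which I expect to follow from a Blaschke-type factorization of $\det S$ over the resonance set $\mcr$ restricted to $\Lambda_0$, using the resonance symmetry $n_{-k}=n_k$ and the fact that the individual factors have modulus $\leq 1$ on the ray $\arg\lambda=-\pi$.

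For the argument estimate I would differentiate and integrate along the ray. Writing
$$\frac{d}{d\tau} \arg \det S(\tau e^{i\pi}) = \Im \, \tr\!\left( S(\tau e^{i\pi})^{-1} \frac{d}{d\tau} S(\tau e^{i\pi}) \right),$$
and using the factorization above, the right-hand side is bounded in absolute value by $C \| \frac{d}{d\tau} S(\tau e^{i\pi}) \|_{\tr}$. Differentiating the explicit representation of Proposition \ref{p:pe-zw} costs at most one extra factor of $\tau$, so this is $O(\tau^{d-2})$ after the same singular-value bookkeeping as in the proof of Lemma \ref{l:realbd}. Integrating from $\tau=1$ to $\tau$ yields the claimed $(\tau^{d-1}+1)$ bound.

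The main obstacle is the first step: writing down the monodromy identity for $S$ (or for $R$) around the origin in $\Lambda$ and verifying that the correction term is trace class with the claimed norm bound. A priori $R(\tau e^{i\pi})$ can be very large because of resonances accumulating near the ray $\arg \lambda = \pi$, so it is essential that in the factorization these resonance contributions collect into a determinant on the real axis, where Lemma \ref{l:realbd} is available. Identifying the correct sign structure to extract the \emph{one-sided} lower bound $|\det S(\tau e^{i\pi})| \geq 1$ from the Blaschke factorization is the most delicate point, and is the only place where self-adjointness enters in an essential way beyond the Hermitian symmetry.
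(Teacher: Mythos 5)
Your plan is missing the one identity that makes the paper's proof work: by \cite[Proposition 2.1]{ch-hi4}, on the ray $\arg\lambda=0$ one has $S(\tau e^{i\pi}) = 2I - JS^*(\tau)J$, where $(Jf)(\omega)=f(-\omega)$. This is a specific, explicit form of what you are calling the ``resolvent monodromy identity,'' and without writing it down the rest of your outline has no engine. Once you have it, everything transfers to the physical axis in one step: since $U:=JS^*(\tau)J$ is unitary, $\det S(\tau e^{i\pi}) = \det(2I - U) = \prod_j (2 - e^{-i\theta_j(\tau)})$, and each factor has modulus $\geq 1$, giving the lower bound immediately; the upper bound follows from $S(\tau e^{i\pi}) = I + JA^*(\tau)J$ with $\|JA^*(\tau)J\|_{\tr} = \|S(\tau)-I\|_{\tr} \leq C\tau^{d-1}$ (Lemma \ref{l:realbd}) and $|\det(I+B)|\leq\exp\|B\|_{\tr}$; and the argument bound follows from $|\arg\det S(\tau e^{i\pi})|\leq \sum_j|\Im\log(2-e^{-i\theta_j})|\leq C\sum_j|1-e^{-i\theta_j}| = C\|S(\tau)-I\|_{\tr}$, using only $\Re(2-e^{-i\theta_j})\geq 1$.

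Your two alternative routes both have concrete problems. For the lower bound, a Blaschke-type factorization of $\det S$ over $\mcr\cap\Lambda_0$ is not available here: in even dimensions the only factorization at hand is the canonical-product one of Lemma \ref{l:frep}, whose factors $E_p(\lambda/\lambda_j)$ do not have per-factor modulus bounds of the Blaschke kind, and invoking that lemma would make the argument circular (Lemma \ref{l:argchange} is an input to the estimates in Section \ref{s:PF}, not a consequence of them). For the argument bound, the differentiation approach needs $\bigl\|\tfrac{d}{d\tau}S(\tau e^{i\pi})\bigr\|_{\tr}=O(\tau^{d-2})$; but Lemma \ref{l:realbd}'s hypothesis (\ref{eq:resolventdecay}) controls only $\chi R(\tau)\chi$ on the physical ray, not its $\tau$-derivative, and says nothing about $R$ on the ray $\arg\lambda=\pi$ where resonances can accumulate. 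Even after reducing to the physical axis via the identity above, you would still need a separate estimate on $\tfrac{d}{d\tau}A(\tau)$ in trace norm, which is a new input, not a one-line corollary of the singular-value bookkeeping in Lemma \ref{l:realbd}. The paper avoids differentiation entirely by reading $\arg\det S(\tau e^{i\pi})$ off the branch $\sum_j\log(2-e^{-i\theta_j(\tau)})$, which is continuous in $\tau$ precisely because each summand vanishes whenever $e^{i\theta_j(\tau)}\to 1$.
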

\begin{proof} 
From \cite[Proposition 2.1]{ch-hi4}, $S(e^{i\pi} \tau) =2I-J S^*(\tau)J$,
where $(Jf)(\omega)=f(-\omega)$.  Hence 
\begin{equation}\label{eq:srelation}
\det S(e^{i\pi} \tau)
=\det(2I-S^*(\tau)).
\end{equation}  
  Using that $S(\tau)$ is unitary, we can write
$\det S(e^{i\pi} \tau) =\prod (2-e^{-i\theta_j(\tau)})$, where 
$\{ e^{i \theta_j(\tau)}\}$ are the eigenvalues of $S(\tau)$ other than
$1$, repeated with 
multiplicity, and $\theta_j(\tau)\in \Real$.  Since $|2-e^{-i\theta_j(\tau)}|\geq 1$, we see 
immediately that $|\det S(\tau e^{i\pi})|\geq 1$.  
Moreover, from Lemma \ref{l:realbd}, (\ref{eq:srelation}) and the estimate
$|\det (I+B)|\leq \exp(\|B\|_{\tr})$ we obtain
the upper bound on $|\det S(\tau e^{i\pi})|$.

Now we turn to the question of bounding $\arg S(\tau e^{i\pi})$.
Note first that $\theta_j$ can 
be chosen so that $e^{i\theta_j(\tau)}$ depends continuously on $\tau$, except,
perhaps, at points where $e^{i\theta_j(\tau)}$ approaches $1$.
Let $\log$ denote the principal branch of the logarithm.
Since $\Re (2-e^{-i\theta_j(\tau)})\geq 1$,
$\log (2-e^{-i\theta_j(\tau)})$ is well-defined and is a continuous function
of $\tau $ if $e^{i\theta_j(\tau)}$ is a continuous function of $\tau$.  Now
\begin{equation}\label{eq:expsum}
\det S(\tau e^{i\pi}) = \exp \left( \sum \log (2-e^{-i\theta_j(\tau)}) \right).
\end{equation}
Since if $\lim_{\tau\rightarrow \tau_0^{\pm}}e^{i\theta_j(\tau)}=1$, then 
$\lim_{\tau\rightarrow \tau_0^{\pm}}\log(2-e^{i\theta_j(\tau)})=0$, the sum $\sum \log (2-e^{-i\theta_j(\tau)})$ depends continuously on 
$\tau\in(0,\infty)$, so that 
$$\arg S(\tau e^{i\pi})-\arg S(e^{i\pi})= 
\sum \Im \log (2-e^{-i\theta_j(\tau)}) - \sum \Im \log (2-e^{-i\theta_j(1)}).$$
But 
\begin{multline*}
\left|\sum \Im \log (2-e^{-i\theta_j(\tau)}) \right| \leq \sum | \Im \log (2-e^{-i\theta_j(\tau)})| \\ 
\leq C \sum | (1-e^{-i\theta_j(\tau)})| \leq C \| S(\tau)-I\|_{\tr }
\leq C(1+\tau^{d-1})
\end{multline*}
by Lemma \ref{l:realbd}.
\end{proof}
We remark that the result about the 
argument in Lemma \ref{l:argchange} is not, in general, true in 
odd dimensions, where $\det S(\tau e^{i\pi})=\overline{\det S(\tau)}$
if $\tau >0$ so that the change in the argument may obey Weyl-type 
asymptotics as $\tau \rightarrow \infty$.

\section{The Poisson formula}\label{s:PF}
We begin with several complex-analytic results which will be helpful in proving the estimates we need on $\det S(\lambda)$ and related quantities.

We shall use the following lemma when working with 
functions holomorphic or meromorphic in a sector.  
\begin{lemma} \label{l:lbs}
Set $U=\{ z\in \Complex: \Re z>0\}$.  Suppose
$f$ is analytic on $U$ and that there exist constants
$C,\; p$, $p\geq 1$, so that $|f(z)| \leq C\exp (C|z|^p)$ 
for any $z\in U$.  Suppose in addition that $f$ is 
nowhere vanishing on $U$.  Then there is a function $g$ analytic 
on $U$ so that $\exp g(z)=f(z)$, $z\in U$.  Moreover, given
$\epsilon>0$ there is a constant $C_\epsilon$ so that 
$$|g(z)|\leq C_\epsilon |z|^{p+\epsilon}\; \text{if $|\arg z|<\pi/2-\epsilon$, 
$|z|>1$}.$$
\end{lemma}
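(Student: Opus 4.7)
The plan is to first establish the existence of $g$ via simple connectivity, then bound $|g|$ using the Borel--Carath\'eodory theorem on disks inside $U$. For the first part, since $U$ is simply connected and $f$ is holomorphic and nowhere zero on $U$, the form $(f'/f)\,dz$ has a holomorphic primitive $g$ on $U$, and adjusting the additive constant arranges $\exp g = f$ on $U$. The growth hypothesis then gives
\[
\Re g(z) = \log|f(z)| \leq \log C + C|z|^p, \qquad z \in U.
\]

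For the quantitative bound on $|g|$, fix $\epsilon > 0$ and take $z \in U$ with $|\arg z| \leq \pi/2 - \epsilon$ and $|z| = r \gg 1$. I would apply Borel--Carath\'eodory on a disk $D(c, R) \subset U$ containing $z$, with $c$ placed on the positive real axis at distance of order $r/\sin\epsilon$ from the origin and $R$ slightly less than $c$. A direct computation verifies $z \in D(c, R) \subset U$ with $R - |z-c|$ of order $r\sin\epsilon$. Since $|w| \leq 2c = O(r/\sin\epsilon)$ on this disk, the hypothesis gives $\Re g(w) \leq C_\epsilon r^p$ on $\partial D(c, R)$, and Borel--Carath\'eodory yields
\[
|g(z)| \leq C_\epsilon\, r^p + C_\epsilon\, |g(c)|,
\]
reducing the problem to bounding $|g|$ at a point on the positive real axis.

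The bound on $|g(c)|$ for $c$ on the positive real axis I would obtain by iterating the Borel--Carath\'eodory argument along disks inside $U$, each disk shifted toward the origin by a controlled amount, until reaching a fixed reference point like $z=1$ where $|g|$ is a known constant. Each step controls $|g|$ at one point by $|g|$ at a point closer to the origin, plus an additive contribution of order of the current $|c|^p$.

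The hard part will be tuning the iteration so that the final exponent is only $p + \epsilon$ rather than a larger universal power. Each Borel--Carath\'eodory step contributes a multiplicative factor depending on the ratio of step size to disk radius, and the number of steps needed to reach scale $r$ is of order $(\log r)/(\text{step size})$; the product of these factors is then a power of $r$. Balancing the step size as a small function of $\epsilon$ against the additive $|c|^p$ contributions (summed via a telescoping geometric series) is where the delicate bookkeeping lies. With the correct choice one expects the bound $|g(z)| \leq C_\epsilon |z|^{p+\epsilon}$ to emerge, with $C_\epsilon \to \infty$ as $\epsilon \to 0$.
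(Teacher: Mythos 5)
Your approach (existence of $g$ via simple connectivity, then Borel--Carath\'eodory applied to a chain of disks reaching from $z$ back to a base point on the positive real axis) is genuinely different from the paper's, which simply invokes the representation theorem of Govorov for functions of finite order analytic in an angle and reads the bound off that representation. Your proposal is a plausible attempt at a self-contained argument, and the first paragraph (existence of $g$, $\Re g = \log|f|$) is fine.

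The difficulty is exactly where you flag it, but I do not think the ``delicate bookkeeping'' can close the gap in the generality of the lemma. Each Borel--Carath\'eodory step on a disk $D(c,R)$ passing control from the center $c$ to a point at distance $\rho$ contributes a multiplicative factor $\frac{R+\rho}{R-\rho}>1$ on $|g(c)|$ (or, after subtracting $g(1)$, on the running quantity $|g(c)-g(1)|$). Since a chain of disks contained in $U$ and centered on the positive real axis requires $R<c$, if the centers decrease geometrically by a ratio $\mu$ one is forced into $\rho\approx(1-\mu)c$ and $R<c$, and the resulting per-step factor is at least roughly $\frac{1}{2\mu-1}$; over the $N\sim\log r/(1-\mu)$ steps the product is $r^{\beta(\mu)}$ with $\beta(\mu)\to 2$ as $\mu\to 1$ (and larger for $\mu$ away from $1$). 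Taking the sharp $\rho_k/R_k\to 0$ limit and also pushing $R_k$ toward $z_{k+1}$ does not help: the product still comes out to $r^{2+o(1)}$. Combining with the additive terms (each of size $\sim |z_k|^p$, amplified by the same product of factors) gives, after optimizing over $\mu$ and the disk radii,
\[
|g(z)| \le C_\epsilon\, r^{\max(p,2)+\epsilon},
\]
not $C_\epsilon\, r^{p+\epsilon}$. For $p\ge 2$ this coincides with the lemma, and since the paper only ever uses the lemma with $p$ equal to an integer $\ge d\ge 2$, the applications in the paper are not at risk. But the lemma is stated for all $p\ge 1$, and for $1\le p<2$ your argument as sketched produces exponent $2+\epsilon$, not $p+\epsilon$. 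Obtaining the sharp exponent requires a sharper tool than iterated Borel--Carath\'eodory --- for instance a Herglotz/Poisson-kernel analysis of the positive harmonic function $CR^p-\Re g$ on a half-disk, or the explicit factorization for functions of finite order in an angle (the Govorov representation the paper cites), which tracks the contribution of the boundary measure in a way that a chain of disk estimates does not.

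One smaller point: the Borel--Carath\'eodory step ``$|g(z)|\le C_\epsilon r^p + C_\epsilon|g(c)|$'' in your second paragraph needs the constant multiplying $|g(c)|$ to be displayed explicitly, since the whole issue lies in the accumulation of exactly that constant; as written the inequality looks innocuous when it is not.
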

\begin{proof}
The existence of an analytic $g$ so that $\exp(g)=f$ is immediate.
The bound on $|g|$ can be proved, for example, by using 
the representation \cite[Theorem 3.3]{govorov}
of a function of finite order analytic in an 
angle. 
\end{proof}

Note that the 
hypothesis $p\geq 1$ in Lemma \ref{l:lbs} is necessary.  For example, the function
$f(z)= e^{-z}$ satisfies the other hypotheses of Lemma
\ref{l:lbs} with $p=0$, but does not satisfy the conclusion of the 
lemma with $p=0$.  For more about related questions, see \cite[Chapter 1]{govorov}.

We recall that the canonical factors $E_p$ are given by
$$E_0(z)=1-z\; \text{and}\; E_p(z) =(1-z)\exp(z+z^2/2+\cdot\cdot\cdot+ z^p/p)\; \text{for}\; p\in\Natural.$$

For entire functions $f$, a stronger result than the following lemma
holds-- compare \cite[Theorem I.9]{levin}.  The following
lemma, while likely not sharp, will suffice for our needs.
\begin{lemma}\label{l:lbawayzero}
Set $U=\{ z\in \Complex: \Re z>0\}$.  Suppose
$f$ is analytic on $U$ and that there exist constants
$C,\; p$, $p\geq 1$, so that $|f(z)| \leq C\exp (C|z|^p)$ 
for any $z\in U$.  Suppose in addition that there is a constant 
$C$ so that 
$$\#\{ z_j: \; z_j\in U; f(z_j)=0\; \text{and}\; |z_j|<r\} \leq C(1+|z|^p)$$
where the zeros $z_j$ are repeated with multiplicity.  Then for any 
$\epsilon,\eta>0$ there is a constant $C_{\epsilon, \eta}$ so
that for any $R>2$
$$\ln |f(z)|\geq -C_{\eta, \epsilon}(1+R^{p+\epsilon}),\; 1\leq |z|\leq R,\;
|\arg z|<\pi/2-\epsilon$$
if $z$ lies outside a family of excluded disks, the sum of whose radii does
not exceed $\eta R$.  Moreover, if $\varphi(z)= \prod
E_{\lfloor p \rfloor}(z/z_j),$ then 
$$|f(z)/\varphi(z)|\leq  C_{\epsilon}(1+R^{p+\epsilon}),\;
 1\leq  |z|\leq R,\; |\arg z|<\pi/2-\epsilon/2.$$
\end{lemma}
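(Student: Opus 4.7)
The plan is to factor $f = \varphi \cdot h$ on $U$, where $\varphi$ is a canonical product carrying exactly the $z_j$ as zeros and $h$ is holomorphic and nowhere zero, and then to apply Lemma \ref{l:lbs} to $h$.

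First I would form $\varphi(z) = \prod_j E_{\lfloor p\rfloor}(z/z_j)$. Because $\lfloor p\rfloor+1 > p$ (a brief case check on whether $p$ is an integer), the zero-counting hypothesis $n(r):=\#\{|z_j|<r\}\leq C(1+r^p)$ forces $\sum |z_j|^{-(\lfloor p\rfloor+1)}<\infty$, so $\varphi$ is an entire function whose zero divisor in $U$ is exactly $\{z_j\}$. Classical canonical-product theory (e.g.\ Levin \cite{levin}, Ch.~I) then supplies two standard estimates: the upper bound $\log|\varphi(z)|\leq C_\epsilon |z|^{p+\epsilon}$ on $\Complex$, and the minimum-modulus (Boutroux--Cartan) estimate that for every $\eta,\epsilon>0$ and $R>2$,
\[
\log|\varphi(z)| \geq -C_{\eta,\epsilon}R^{p+\epsilon}, \qquad |z|\leq R,
\]
holds outside a family of exceptional disks whose radii sum to at most $\eta R$.

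Next set $h := f/\varphi$; this is analytic and nowhere zero on $U$. Outside the exceptional disks, the hypothesis $|f|\leq C\exp(C|z|^p)$ and the minimum-modulus bound on $\varphi$ yield $|h(z)|\leq \exp(C_\epsilon R^{p+\epsilon})$ for $z\in U$ with $|z|\leq R$. To promote this to a bound on a subsector, I would pick a radius $r\in[R,2R]$ whose circle avoids all of the exceptional disks (possible since their total radius is at most $\eta R$) and apply the maximum principle on the intersection of $\{|z|<r\}$ with a sector $|\arg z|<\pi/2-\epsilon'$, using a Phragm\'en--Lindel\"of argument of suitable opening to control the straight-ray portion of the boundary. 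The outcome is $|h(z)|\leq \exp(C_\epsilon |z|^{p+\epsilon})$ on $\{|\arg z|<\pi/2-\epsilon'\}$.

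Applying Lemma \ref{l:lbs} to $h$ with exponent $p+\epsilon$ then produces $g$ holomorphic on $U$ with $h=e^g$ and $|g(z)|\leq C_\epsilon |z|^{p+\epsilon}$ on $|\arg z|<\pi/2-\epsilon$, $|z|>1$. Taking real parts, $\ln|h(z)|=\Re g(z)\geq -|g(z)|\geq -C_\epsilon |z|^{p+\epsilon}$ on that sector, and combining with the Boutroux--Cartan lower bound on $|\varphi|$ yields $\ln|f(z)|\geq -C_{\eta,\epsilon}(1+R^{p+\epsilon})$ outside the exceptional disks, which is the first conclusion. The companion bound on $|f/\varphi|$ follows from $\ln|f/\varphi|=\Re g\leq |g|\leq C_\epsilon(1+R^{p+\epsilon})$, applied with parameter $\epsilon/2$ to obtain the slightly wider sector $|\arg z|<\pi/2-\epsilon/2$. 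I expect the main difficulty to be the middle step: extending the upper bound on $|h|$ from the complement of exceptional disks to a full subsector. The $z_j$ may cluster close to the imaginary axis, so the exceptional disks can crowd any circle on which one would like to apply the maximum principle, and a careful choice of opening angle with a Phragm\'en--Lindel\"of argument is required to accommodate order $p\geq 1$.
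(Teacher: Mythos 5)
Your proposal follows essentially the same route as the paper: factor $f=\varphi h$ with $\varphi$ the canonical product over the zeros, get the upper bound on $\log|\varphi|$ from standard canonical-product estimates and the lower bound via the minimum-modulus (Cartan/Levin) theorem outside small exceptional disks, propagate the resulting upper bound on $h=f/\varphi$ to a full subsector by the maximum principle, and then invoke Lemma~\ref{l:lbs} on the nonvanishing function $h$. The paper cites \cite[Theorem I.9]{levin} for the Boutroux--Cartan step, exactly as you do.

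One detail needs fixing. You conclude by ``applying Lemma~\ref{l:lbs} to $h$ with exponent $p+\epsilon$.'' But the hypothesis of Lemma~\ref{l:lbs} is a growth bound $|h(z)|\leq C\exp(C|z|^{p+\epsilon})$ valid on \emph{all} of $U=\{\Re z>0\}$, and what you have established (and all that can be established, since $\varphi$ may be tiny near the imaginary axis) is such a bound only on the subsector $|\arg z|<\pi/2-\epsilon'$. The paper bridges this by applying Lemma~\ref{l:lbs} not to $h$ but to $w\mapsto h\bigl(w^{1/(1-\epsilon/2)}\bigr)$, which pulls the subsector back to the half-plane so that the lemma's hypothesis is genuinely met, and then transfers the conclusion back at the cost of a slightly worse exponent and a slightly narrower sector. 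You should insert the same reparametrization; without it the cited lemma does not apply. With that change your outline and the paper's proof coincide.
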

\begin{proof}
  By standard estimates on 
canonical products, for $\epsilon>0$ the entire function $\varphi$ satisfies
$|\varphi(z)| \leq C_\epsilon(1+|z|^{p+\epsilon/2})$.  Hence, by 
\cite[Theorem I.9]{levin}, for $R>1$ and
any $\eta>0$, there is a constant $C_{\eta, \epsilon}$ so that 
$$\ln |\varphi(z)|\geq - C_{\eta, \epsilon}(1+R^{p+\epsilon/2}),\;  |z|\leq R$$
where $z$ in addition lies outside a family of disks the sum of whose
radii does not exceed $\eta R$.  

Now consider the function $f(z)/\varphi(z)$, which 
 is a nonvanishing analytic function on $U$.
Moreover, with perhaps a new constant $C_{\eta, \epsilon}$
$$|f(z)/\varphi(z)| \leq C_{\eta, \epsilon}(1+R^{p+\epsilon/2}),\;  
|z|\leq R,\; z\in U$$
if $z$ lies outside a family of disks the sum of whose
radii does not exceed $\eta R$.  Since $f/\varphi$ is analytic, by the
maximum principle if $R\gg 1$
$$|f(z)/\varphi(z)|\leq C_{\eta, \epsilon}(1+R^{p+\epsilon/2}),\;
  |z|\leq R,\; |\arg z|<\pi/2-\epsilon/2.$$
The reason for shrinking the size of the sector is the need, for 
every previously excluded point $z'$ (ie, every point in one of the originally
excluded disks) to have bounds
on $|f/\varphi|$ on a closed curve  lying inside $U$ and containing
$z'$ in its interior.  Now since $f/\varphi$ is nonvanishing on $U$ we
can apply Lemma \ref{l:lbs} to the function $(f/\varphi) (z^{1/(1-\epsilon/2)})$
to complete the proof.
\end{proof}

Let $S$ denote the scattering matrix, unitary for $\arg \lambda =0$,
associated to a self-adjoint black-box type operator $P$. 
Then
define
\begin{equation}\label{eq:f}
f(\lambda) =\frac{ \det S(\lambda)}{\det S(\lambda e^{i\pi})}.
\end{equation}

The proof of the following lemma closely resembles that of 
\cite[Proposition 6]{zwpfodd}
 and of a related result of \cite[Section 2]{zworskieven}.  We include
the proof here for the convenience of the reader and 
 because the need for working in sectors
of $\Complex$ rather than in balls in $\Complex$ means that 
Lemmas \ref{l:lbs} and \ref{l:lbawayzero} 
substitute for what can be done with Cartan's lemma and 
Caratheodory's inequality for the disk.  The proof mostly 
 follows \cite{zworskieven}, highlighting the points at which
Lemmas  \ref{l:lbs} and \ref{l:lbawayzero} are needed.
\begin{lemma}\label{l:frep}  Let $d$ be even, and let $f$ be as in (\ref{eq:f}).
If $n_{-1}(r)+n_{-2}(r) \leq C r^p$, $p\in \Natural$, set 
$$P_1(\lambda)=\prod_{\substack{\lambda_j\in \mcr\\ -\pi/2<\arg \lambda_j<3\pi/2}}
E_p(\lambda/\overline{\lambda_j}),\; 
P_2(\lambda)=\prod_{\substack{\lambda_j\in \mcr\\- 3 \pi/2<\arg \lambda_j<\pi/2}}
E_p(\lambda e^{i\pi}/\overline{\lambda_j}) $$
$$Q_1(\lambda)= \prod_{\substack{\lambda_j\in \mcr\\-3\pi/2<\arg \lambda_j<\pi/2}}
E_p(\lambda/\lambda_j),\; Q_2(\lambda) = \prod_{\substack{\lambda_j\in \mcr\\ - \pi/2<\arg \lambda_j< 3 \pi/2}}E_p(\lambda e^{i\pi}/\lambda_j)$$
Then for $-3\pi/2<\arg \lambda<\pi/2$, 
$$f(\lambda) = e^{g(\lambda)} \frac{P_1(\lambda)}{Q_1(\lambda)}
 \frac{Q_2(\lambda)}
{P_2(\lambda)}$$
where  $g(\lambda)$ is analytic in $-3\pi/2<\arg \lambda<\pi/2$ and
for some $p'\geq p, C>0$ we have
$$|g(\lambda)|\leq C (1+|\lambda|^{p'}),\; -4\pi/3<\arg \lambda<\pi/3.$$
\end{lemma}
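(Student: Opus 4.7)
The plan is to factor $\det S(\lambda)$ and $\det S(\lambda e^{i\pi})$ separately on the sector $\Omega = \{-3\pi/2 < \arg \lambda < \pi/2\}$, each as a canonical product (capturing the known zero/pole structure) multiplied by the exponential of a polynomially bounded analytic function, and then divide. This mirrors the argument of \cite[Proposition 6]{zwpfodd} and the corresponding argument in \cite{zworskieven}, with Lemmas \ref{l:lbs} and \ref{l:lbawayzero} standing in for the combination of Cartan's lemma and Carath\'eodory's inequality that one would use on disks.

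\textbf{Main steps.} First, I would use the standard relationship between the scattering matrix and resonances for black-box perturbations: $\det S(\mu)$, extended to $\Lambda$, has zeros at the conjugates $\overline{\lambda_j}$ of resonances and poles at $\lambda_j$, counted with multiplicity. Restricted to $\Omega$ this identifies precisely the factors in $P_1$ and $Q_1$ respectively; similarly, the zeros and poles of $\det S(\lambda e^{i\pi})$ on $\Omega$ match the factors in $P_2$ and $Q_2$. Next, I would record the growth bounds: unitarity gives $|\det S(\tau)| = 1$ on $\arg \lambda = 0$, and Lemma \ref{l:argchange} gives $|\det S(\tau e^{i\pi})| \leq C\exp(C\tau^{d-1})$ on $\arg \lambda = -\pi$; choosing $p \geq d-1$, both are dominated by $C\exp(C|\lambda|^p)$. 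The hypothesis $n_{-1}(r) + n_{-2}(r) \leq Cr^p$ controls the number of zeros and poles of $\det S(\lambda)$ and $\det S(\lambda e^{i\pi})$ of modulus at most $r$ inside $\Omega$.

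Now set $F_1(\lambda) = \det S(\lambda)/(P_1(\lambda)/Q_1(\lambda))$, so that $F_1$ is analytic and nowhere zero in $\Omega$. After a conformal change of variables sending $\Omega$ to the right half-plane $U$ (e.g.\ $\lambda = -w^4$, which preserves exponential-polynomial growth up to an adjustment of the order), the version of Lemma \ref{l:lbawayzero} applied to $\det S$ (viewed through the conformal map) provides both the lower bound on $|P_1/Q_1|$ away from a union of small excluded disks and the upper bound $|F_1(\lambda)| \leq C(1+|\lambda|^{p'})$ on a slightly smaller sector. Since $F_1$ is analytic and nonvanishing, Lemma \ref{l:lbs} applied to $F_1$ on the half-plane then yields an analytic $h_1$ with $F_1 = e^{h_1}$ and $|h_1(\lambda)| \leq C_\epsilon |\lambda|^{p'+\epsilon}$ on a further shrunken subsector. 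Performing the same procedure on $\det S(\lambda e^{i\pi})$ yields $h_2$ with analogous bounds, and setting $g = h_1 - h_2$ gives the claimed identity $f = e^g (P_1/Q_1)(Q_2/P_2)$ with polynomial control on $g$ in $\{-4\pi/3 < \arg \lambda < \pi/3\}$, the double shrinking (from $\Omega$ to the final sector) accounting for the two applications of Lemma \ref{l:lbawayzero} and Lemma \ref{l:lbs}.

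\textbf{Main obstacle.} The delicate point is the growth control of $\det S(\lambda)$ off the distinguished rays $\arg \lambda \in \{0,-\pi\}$: unitarity and Lemma \ref{l:argchange} provide only boundary information, but $\det S$ has poles inside $\Omega$, so one cannot directly apply a Phragm\'en--Lindel\"of argument. The entire structural reason for first dividing by $\varphi_1 = P_1/Q_1$, and for working with the pair of sector lemmas \ref{l:lbs}--\ref{l:lbawayzero} rather than their disk counterparts, is precisely to navigate this difficulty; the two successive shrinkings of the sector are the price paid. A second technical point, minor but deserving care, is keeping track of the exponents $p,p',p''$ (and the adjustment caused by the conformal map, which is what forces the hypothesis $p \geq 1$ in Lemma \ref{l:lbs}) so that the final bound on $g$ is still polynomial.
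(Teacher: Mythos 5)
You have the right outline — factor $\det S(\lambda)$ and $\det S(\lambda e^{i\pi})$ separately over the sector $\Omega=\{-3\pi/2<\arg\lambda<\pi/2\}$, cancel the pole/zero structure against the canonical products $P_i,Q_i$, and invoke Lemmas \ref{l:lbs} and \ref{l:lbawayzero} to exponentiate and bound what remains — and you correctly identify that the crux is controlling $|\det S(\lambda)|$ \emph{inside} the sector, not just on the rays $\arg\lambda\in\{0,-\pi\}$. But the resolution you propose is circular. Lemma \ref{l:lbawayzero} is not a tool that \emph{produces} the interior upper bound $|\det S(\lambda)Q_1(\lambda)|\le C\exp(C|\lambda|^{p})$ from boundary data; it \emph{assumes} such a bound as a hypothesis (``$|f(z)|\le C\exp(C|z|^p)$ for any $z\in U$'') and then delivers a lower bound outside excluded disks plus the maximum-principle upgrade. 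Unitarity on $\arg\lambda=0$ and Lemma \ref{l:argchange} on $\arg\lambda=-\pi$ give only boundary estimates, and, as you yourself note, $\det S$ has poles in $\Omega$, so no Phragm\'en--Lindel\"of argument is available — nor does dividing by $P_1/Q_1$ help, since you would still need an a priori interior bound on the analytic product $\det S\cdot Q_1$ (or a weak finite-order bound in the interior) before any of the complex-analytic machinery engages. Your proposal, as written, supplies no source for that bound.

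The paper obtains the missing interior estimate by going back down to the resolvent level. It uses the Sj\"ostrand--Zworski representation $S(\lambda)=I+c_d\lambda^{d-2}\,{\mathbb E}^{\phi}_+(\lambda)(I+K(\lambda,\lambda_0))^{-1}[\Delta,\chi]\,{}^t{\mathbb E}^{\phi}_-(\lambda)$, where $K(\lambda,\lambda_0)$ is a compact operator analytic on $\Lambda$ with $K^{k_0}$ trace class. The estimates of \cite{vodevsb,zwrp,vodeveven} give $|\det(I+K(\lambda,\lambda_0)^{k_0})|\le\det(I+|K(\lambda,\lambda_0)|^{k_0})\le C\exp(C|\lambda|^m)$ in any fixed sector $|\arg\lambda|\le\theta_0$ together with an $O(r^m)$ bound on the number of zeros. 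The Gohberg--Krein inequality
$$\|(I+K)^{-1}\|\le (1+\|K\|)^{k_0-1}\,\frac{\det(I+|K|^{k_0})}{|\det(I+K^{k_0})|}$$
reduces bounding $\|(I+K(\lambda,\lambda_0))^{-1}\|$ to a \emph{lower} bound on $|\det(I+K^{k_0})|$, and it is precisely here that Lemmas \ref{l:lbs} and \ref{l:lbawayzero} are applied (to $\det(I+K^{k_0})$, which is analytic and has the a priori exponential upper bound needed). This yields $\|(I+K(\lambda,\lambda_0))^{-1}\|\le C\exp(C(1+R^{m+\epsilon}))$ off excluded disks in the sector, and then, following \cite{zwpfodd}, $|\det S(\lambda)|\le C\exp(C R^{(m+\epsilon)d})$ there. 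Only after this does the argument return to the shape you sketch: $\det S\cdot Q_1$ is analytic, the maximum principle removes the excluded disks on a slightly smaller sector, and Lemmas \ref{l:lbawayzero} and \ref{l:lbs} applied to $Q_1\det S/P_1$ (and its $\lambda e^{i\pi}$ counterpart) produce the polynomially bounded $g_1,g_2$ with $g=g_1-g_2$. In short, your proposal is structurally right in its last act but omits the entire first act — the Fredholm-determinant argument that furnishes the interior exponential bound your cited lemmas all presuppose.
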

\begin{proof}  We identify the region
$\{ \lambda \in \Lambda: \; -3\pi/2<\arg \lambda<\pi/2\}$ with 
$\Complex \setminus i[0,\infty)$. 

 The results of \cite{vodeveven,vodev2} show that with 
our assumptions on $P$,
there is a $p\in \Natural$ so that $n_{-1}(r)+ n_{-2}(r)=O(r^p)$ as 
$r\rightarrow \infty$.  Using 
this and the fact that $R(\lambda)$ has at most finitely many poles
in $\Lambda_{0}$, the functions $P_j$, $Q_j$, $j=1,2$, are well-defined entire
functions.
Since $S(\lambda)S^*(\overline{\lambda})=I$,
$\lambda_j$ is a pole of $\det S(\lambda)$ if and only if 
$\overline{\lambda_j}$ is a zero of $\det S(\lambda)$.   
The function 
$$h(z)\defeq f(\lambda) \frac{Q_1(\lambda)}{P_1(\lambda)}\frac{P_2(\lambda)}{Q_2(\lambda)}
$$ is an analytic, nowhere vanishing function if $-3\pi/2<\arg \lambda<\pi/2$,
see \cite[Theorem 4.5]{ch-hi4},
so the existence of a function $g$ so that $\exp(g)=h$ in
this region is immediate.  What needs to be proved is the polynomial bound
on $g$ in this region.

We use another representation of the scattering matrix, see 
\cite[Section 2]{zworskieven}, cf. \cite[Section 3]{gu-zw}.  The 
one described below can be deduced from 
Proposition \ref{p:pe-zw} using also \cite[Section 3]{sj-zw}.
The scattering matrix   $S(\lambda)= I+A(\lambda)$,
where 
$$A(\lambda)= c_d\lambda^{d-2} {\mathbb E}^{\phi}_+( \lambda) 
(I+K(\lambda,\lambda_0))^{-1}[\Delta,\chi]^t {\mathbb E}^{\phi}_-(\lambda)$$
where $\phi\in C_c^\infty(\Real^d)$ is $1$ on $\overline{B}(0;R_0)$ and
${\mathbb E}^{\phi}_{\pm}$ are as defined in (\ref{eq:edef}).  The operator
$K(\lambda,\lambda_0)$ is a compact operator, analytic on $\Lambda$, defined
in \cite[Section 3]{sj-zw}:
\begin{align*}
K(\lambda,  \lambda_0)= [\Delta,\chi_0]R_0(\lambda)(1-\chi_1)\chi_4-[\Delta,\chi_2]R(\lambda_0)\chi_1+\chi_2(\lambda_0^2-\lambda^2)R(\lambda_0)\chi_1\\
\phi,\chi_i\in C_c^\infty(\Real^d),\; \chi_i\equiv 1 \;\text{on}\;\overline{B}(0;R_0),\; 
\chi_i\chi_{i-1}=\chi_{i-1},\; \phi \chi_4=\chi_4
\end{align*}
where $R_0(\lambda)=(-\Delta-\lambda^2)^{-1}$ when $0<\arg \lambda<\pi$ and
is the holomorphic extension otherwise.   Here $\lambda_0$ is a point in
$ \Lambda$ with
$0<\arg \lambda_0<\pi$ and is chosen to ensure the invertibility of $I+K(\lambda_0,
\lambda_0)$.  Hence by the analytic Fredholm theory $(I+K(\lambda, \lambda_0))^{-1}$
is a meromorphic function on $\Lambda$.

The assumptions made on $P$ ensure that
 the operator $K(\lambda,\lambda_0)^{k_0}$ is
trace class.  We remark that we make no effort here to find the 
optimal value of $p'$ so that  the statement of the lemma holds.
By techniques of \cite{vodevsb, zwrp} (see also \cite{vodeveven}),
if $|\arg \lambda|\leq \theta_0$ there is
a natural number $m$ and a constant $C$ (depending on 
$\theta_0$, but not on $|\lambda|$), so that
\begin{equation}\label{eq:detest}
|\det(I+K(\lambda,\lambda_0)^{k_0})|\leq \det(I+|K(\lambda,\lambda_0)|^{k_0})
\leq C\exp (C|\lambda|^{m}).
\end{equation}
Moreover, the number of zeros of $|\det(I+K(\lambda,\lambda_0)^{k_0})$
(counted with multiplicity)
in the region $\{ \lambda \in \Lambda: \; |\arg \lambda|\leq \theta_0;\;
|\lambda|\leq r\}$ is $O(r^m)$ (\cite{vodeveven, vodev2}).
We apply this to the inequality (\cite[Theorem 5.1]{g-k}
$$\| (I+K(\lambda, \lambda_0))^{-1}\| 
\leq \| (I+\|K(\lambda, \lambda_0)\|)^{k_0-1}\; 
\frac{\det (I+|K(\lambda,\lambda_0)|^{k_0})}{|\det (I+K(\lambda,\lambda_0)^{k_0})|
}.$$ 

It is here we can see the need of Lemmas \ref{l:lbs} and \ref{l:lbawayzero}.
Using these lemmas and the estimate (\ref{eq:detest}), we see that
given $\eta,\epsilon>0$,  there is a constant $C_{\epsilon, \eta}$ so
that
for $|\arg \lambda|<\theta_0 -\epsilon$ and 
$1\leq |\lambda| \leq R$, outside a family
of excluded disks, the sum of whose radii does not exceed $\eta R$, 
$$\| (I+K(\lambda,\lambda_0))^{-1}\| \leq C_{\epsilon, \eta}
\exp(C (1+ R^{m+\epsilon})).$$
Using this as in \cite{zwpfodd}, one can show that, perhaps with new constant
$C_{\epsilon,\eta}$ 
\begin{equation}\label{eq:detest2}
|\det S(\lambda)| \leq C_{\eta,\epsilon} \exp C_{\eta,\epsilon}(R^{(m+\epsilon)d})
\end{equation}
for $\lambda$ in the same region.

Since $\det S(\lambda) Q_1(\lambda)$ is analytic for 
$-3\pi/2<\arg \lambda<\pi/2$, using (\ref{eq:detest2}) and the maximum principle
gives
$$
|Q_1(\lambda) \det S(\lambda)| \leq C_{\epsilon} \exp C_{\eta,\epsilon}(|\lambda|^{(m+\epsilon)d})
\; |\lambda|>1,\; -3\pi/2+\epsilon<\arg \lambda<\pi/2-\epsilon.$$
Since $Q_1(\lambda)\det S(\lambda) /P_1(\lambda)$ is a nowhere vanishing
analytic function in $-3\pi/2<\arg \lambda<\pi/2$, applying Lemmas
\ref{l:lbawayzero} and \ref{l:lbs} we find that 
$$Q_1(\lambda)(\det S(\lambda) )/P_1(\lambda)=\exp g_1(\lambda)$$ here,
with $g_1$ polynomially bounded in the sector 
$-3\pi/2+\epsilon<\arg \lambda<\pi/2-\epsilon.$  The same 
argument gives $Q_2(\lambda)\det S(\lambda e^{i\pi}) /P_2(\lambda)
=\exp g_2(\lambda)$, with $g_2$ polynomially bounded in the same region.
Since $g=g_1-g_2$, we are done.
\end{proof}
 
We continue to use the function $f$ defined in (\ref{eq:f}).  From
the fact that $S(\lambda)$ is unitary when $\arg \lambda=0$ and from Lemma
\ref{l:argchange}, we see that $f(\lambda)$ has neither zeros
nor poles with $\arg \lambda=0$.  Since, for $\tau>0$, $\det S(\tau 
e^{i\pi})\det S(\tau e^{-i\pi})=1$, from Lemma \ref{l:argchange}
the function $f(\lambda)$ has neither
poles nor zeros with $\arg \lambda =-\pi$.

We identify $\{ \lambda \in \Lambda: \; -3\pi/2<\arg \lambda<\pi/2\}$ with 
$\Complex \setminus i[0,\infty)$ and consider the function
$f$ defined in this region, as studied in Lemma \ref{l:frep}.
Define the {\em distribution}
\begin{equation}\label{eq:v}
v(t) = \int _{-\infty}^\infty e^{-i\lambda t} \; \frac{f'(\lambda)}{f(\lambda)} 
d\lambda.
\end{equation}
We clarify that in this integral for $\lambda \in \Real_+$ we understand
$\arg \lambda =0$, and for $\lambda\in \Real_-$ we understand $\arg \lambda
= -\pi$.
Note that this is well-defined as a distribution, as we describe below. 
The function
 $f$ has neither 
zeros nor poles with $\arg \lambda =0$ or $\arg \lambda =\pm \pi$.
Moreover, $f(\lambda)\rightarrow 1$ as $\lambda\rightarrow 0$ with 
$\arg \lambda =0$ or $\arg \lambda = \pm \pi$, see \cite[Section 6]{chobstacle}.
Hence (identifying $\arg \lambda=-\pi$ with $(-\infty,0)$ and
$\arg \lambda =0$ with $(0,\infty)$), we can find a continuous 
function $\ell(f(\lambda))$ on $\Real$ so that $\exp(\ell(f(\lambda)))=f(\lambda)$ for $\lambda\in \Real$.  Moreover, $\ell(f(\lambda))$ is in 
fact smooth on $\Real \setminus \{0\}$, and has an expansion in 
powers of $\lambda$ and $\log \lambda$ at $0$, see \cite[Section 6]{chobstacle}.
Using Lemma \ref{l:frep} we see that $\ell(f)$ is a tempered distribution
on $\Real$.  Hence its derivative, $f'/f$, is also a tempered
distribution.



\begin{lemma}\label{l:vinpoles}
The distribution $v(t)$ defined by (\ref{eq:v}) is given by
$$v(t)= 2\pi i \sum_{ \lambda_j\in \Lambda_{-1}\cap \mcr} 
\left( e^{-i\lambda_j t}+e^{i\overline{\lambda}_j t}\right) - 2\pi i
\sum_{\lambda_j \in \Lambda_0\cap \mcr} 
\left( e^{-i\overline{\lambda_j} t}+e^{i\lambda_j t}\right)\; \text{if $t>0$}.$$
\end{lemma}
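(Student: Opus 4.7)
The plan is to evaluate $v(t)$ by contour deformation into the sector where $e^{-i\lambda t}$ decays, using the factorization from Lemma~\ref{l:frep}. Writing $f = e^g P_1 Q_2 / (Q_1 P_2)$, we get
\[
\frac{f'}{f} = g' + \frac{P_1'}{P_1} - \frac{Q_1'}{Q_1} + \frac{Q_2'}{Q_2} - \frac{P_2'}{P_2}.
\]
Under the identification $\{-3\pi/2 < \arg\lambda < \pi/2\} \cong \Complex \setminus i[0,\infty)$, the contour in (\ref{eq:v}) is the standard real axis, and for $t>0$ the region where $|e^{-i\lambda t}|$ decays is the standard lower half plane $\{-\pi < \arg\lambda < 0\}$. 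Since $f'/f$ is only a tempered distribution on $\Real$ and polynomially bounded as an analytic function on the sector, I would pair $v$ with a test function $\phi \in C_c^\infty((0,\infty))$ and use Paley--Wiener to see that $\hat\phi$ is entire and rapidly decreasing in $\{\Im\lambda \leq 0\}$. Then $\langle v, \phi\rangle = \int_\Real \hat\phi(\lambda) (f'/f)(\lambda)\, d\lambda$, which is amenable to contour deformation.

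The next step is to close the contour with a large semicircular arc $\Gamma_R$ inside $\{-\pi < \arg\lambda < 0\}$, giving a clockwise closed curve, so that $\langle v, \phi\rangle = -2\pi i \sum \mathrm{Res}$ over poles of $\hat\phi(\lambda) (f'/f)(\lambda)$ inside. A direct inspection of the canonical products yields the following zero set inside this half plane: $P_1$ has simple zeros at $\overline{\lambda_j}$ for $\lambda_j \in \mcr\cap\Lambda_0$; $Q_1$ at $\lambda_j$ for $\lambda_j \in \mcr\cap\Lambda_{-1}$; $Q_2$ at $\lambda_j e^{-i\pi}$ for $\lambda_j \in \mcr\cap\Lambda_0$; and $P_2$ at $\overline{\lambda_j} e^{-i\pi}$ for $\lambda_j \in \mcr\cap\Lambda_{-1}$. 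Each logarithmic derivative contributes residue $\pm 1$ (with the signs from the decomposition above). Using that $\lambda_j e^{-i\pi}$ and $\overline{\lambda_j} e^{-i\pi}$ coincide with the complex numbers $-\lambda_j$ and $-\overline{\lambda_j}$, so that $\hat\phi(\lambda_j e^{-i\pi}) = \int \phi(t) e^{i\lambda_j t}\, dt$ and similarly for the other, a direct bookkeeping matches the four residue groups to the four exponentials $e^{-i\lambda_j t}$, $e^{i\overline{\lambda_j} t}$, $e^{-i\overline{\lambda_j} t}$, $e^{i\lambda_j t}$ with the correct signs appearing in the claimed formula.

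The main technical obstacle is justifying that the integral along $\Gamma_R$ vanishes as $R\to\infty$. For this I would pick a sequence $R_n \to \infty$ whose circles avoid the excluded disks provided by Lemma~\ref{l:lbawayzero}, applied to the entire canonical products (whose zero counts are $O(r^p)$ by the upper bounds of Vodev~\cite{vodeveven, vodev2}); away from these disks each logarithmic derivative is polynomially bounded, and combined with the rapid decay of $\hat\phi$ in the lower half plane, the arc contribution is $o(1)$. The $g'$ term is analytic inside with no residues, and its contribution to $\int \hat\phi\, g'\, d\lambda$ similarly vanishes after closing the contour in the lower half plane, using the polynomial bound on $g$ from Lemma~\ref{l:frep}. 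A small detour around $\lambda=0$ contributes nothing in the limit, since $\ell(f)$ has only a $\lambda$--$\log\lambda$ expansion there by the discussion immediately preceding the lemma. Together these give $\langle v, \phi\rangle$ equal to the pairing of $\phi$ with the right-hand side of the stated formula for all $\phi \in C_c^\infty((0,\infty))$, proving the distributional identity on $(0,\infty)$.
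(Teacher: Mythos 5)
Your proposal uses the same factorization from Lemma~\ref{l:frep} and arrives at the same residue bookkeeping as the paper, and the final answer is correct; but the mechanism is different, and as written it has a gap in the key technical step.

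The paper does not perform a global contour deformation. Instead it writes each of $P_1'/P_1$, $Q_1'/Q_1$, $Q_2'/Q_2$, $P_2'/P_2$ as an explicit absolutely convergent sum $\sum(\lambda/\mu_j)^p/(\lambda-\mu_j)$ (the log-derivative of the canonical factors $E_p$), applies the known distributional Fourier transform of $q(\xi)/(\xi-(a+ib))$ to each term, and sums. The only place contour considerations enter is the $g'$ term, which is handled via the Paley--Wiener--Schwartz theorem applied to $g$ (polynomially bounded and analytic in a sector containing the lower half plane). There is no arc estimate to justify, because the Fourier transform is taken term-by-term and the interchange of $\mathcal{F}$ with the sum is justified by continuity of $\mathcal{F}$ on tempered distributions.

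In your version, you close the contour with arcs $\Gamma_{R_n}$ and must show $\int_{\Gamma_{R_n}} \hat\phi\,(f'/f)\,d\lambda\to 0$. You assert that ``away from these disks each logarithmic derivative is polynomially bounded'' and cite Lemma~\ref{l:lbawayzero}; but that lemma provides a \emph{lower} bound on $\log|\varphi|$ (equivalently an upper bound on $-\log|\varphi|$), not a bound on $|\varphi'/\varphi|$. Bounding $\log|\varphi|$ from below does not control $\varphi'/\varphi$, since the latter involves a sum $\sum 1/(\lambda-\mu_j)$ that can be large even when the product $\prod|\lambda-\mu_j|$ is not small. A polynomial bound on $|\varphi'/\varphi|$ along circles avoiding an exceptional set \emph{is} a standard estimate for canonical products of finite genus with zeros of polynomial density (split into near and far zeros, using the counting function and a Cartan-type margin to the nearest zeros), but it needs a separate argument and is not supplied by the lemma you cite. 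So either import that classical estimate explicitly, or---more simply and in the spirit of the paper---avoid the arc altogether by Fourier transforming the explicit log-derivative sums term by term, reserving the Paley--Wiener argument for the entire-nonvanishing factor $e^g$.

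Two smaller points. First, the residues of $f'/f$ at a zero listed $m$ times are $\pm 1$ per listing (i.e., $\pm m$ in total), so ``simple zeros'' should be read with the multiplicity convention in mind; your bookkeeping is consistent but the wording is loose. Second, your treatment of $g'$ is fine: $\hat\phi$ decays faster than any polynomial in the closed lower half plane while $g'$ is polynomially bounded there (by Cauchy's estimate from the bound on $g$ in the slightly larger sector $-4\pi/3<\arg\lambda<\pi/3$), so the arc contribution from $g'$ genuinely vanishes; this matches the paper's Paley--Wiener--Schwartz argument, phrased slightly differently.
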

\begin{proof}
We use the representation for $f$ from Lemma \ref{l:frep}. 
Hence
\begin{align}\label{eq:logderivexp}
\frac{f'(\lambda)}{f(\lambda)} & = 
\frac{P_1'(\lambda)}{P_1(\lambda)}-\frac{Q_1'(\lambda)}{Q_1(\lambda)}
+\frac{Q_2'(\lambda)}{Q_2(\lambda)} -\frac{P_2'(\lambda)}{P_2(\lambda)} +
g'(\lambda) \nonumber \\ &
= \sum _{\substack{\lambda_j\in \mcr\\ -\pi/2<\arg \lambda_j<3\pi/2}} 
\frac{(\lambda/\overline{\lambda_j})^p}{\lambda-\overline{\lambda_j}} 
- \sum _{\substack{\lambda_j\in \mcr\\ -3\pi/2<\arg \lambda_j<\pi/2}} 
\frac{(\lambda/{\lambda_j})^p}{\lambda-{\lambda_j}}  \nonumber
 \\ & \hspace{4mm}
- \sum _{\substack{\lambda_j\in \mcr\\ -\pi/2<\arg \lambda_j<3\pi/2}} 
\frac{(e^{i\pi} \lambda/{\lambda_j})^p}{e^{i\pi} \lambda-{\lambda_j}} 
+ \sum _{\substack{\lambda_j\in \mcr\\ -3\pi/2<\arg \lambda_j<\pi/2}} \frac{(e^{i\pi} \lambda/\overline{\lambda_j})^p}{e^{i\pi} \lambda-\overline{\lambda_j}} 
+g'(\lambda).
\end{align}

Let $q$ be a polynomial.  
For $t>0$, $a,b\in \Real$ and $b\not = 0$, 
$$\mathcal{F}\left\{ \frac{q(\xi)}{ \xi -(a+ib)}\right\}(t)=
\left\{ \begin{array}{ll} -2\pi i e^{-i(a+ib)t} q(a+ib)\; & 
\text{if $b<0,\; t>0$}\\
0& \text{ if $b>0,\; t>0$}
\end{array}
\right.
$$
as a distribution.  Applying this to (\ref{eq:logderivexp}), we find that

\begin{align} \nonumber 
v(t) & = 2\pi i \left( -\sum_{\lambda_j \in \mcr\cap \Lambda_0} e^{-i\overline{\lambda_j} t}
+  \sum_{\lambda_j \in \mcr\cap \Lambda_{-1}}e^{-i \lambda_j t}
-  \sum_{\lambda_j \in \mcr\cap \Lambda_{0}}e^{i \lambda_j t}
+ \sum _{\lambda_j \in \mcr\cap \Lambda_{-1}}e^{i \overline{\lambda_j} t} \right) 
\\ & \hspace{8mm}
+ \int_{-\infty}^\infty e^{-i\lambda t} g'(\lambda) d\lambda  \;\; \; \text{if $t>0$}.
\end{align}

Now consider $\int_{-\infty}^\infty e^{-i\lambda t}g'(\lambda) d\lambda$, which
is well-defined as a distribution on $\Real$.  
By Lemma \ref{l:frep} the distribution
$\int_{-\infty}^\infty e^{-it\lambda }g(\lambda) d\lambda$ has inverse Fourier
transform which is analytic and polynomially bounded in the
open lower half plane.  
Thus, by a version of the Paley-Weiner-Schwartz Theorem 
(e.g. \cite[Theorem 7.4.3]{ho1}), 
the distribution $\int_{-\infty}^\infty e^{-it\lambda }g(\lambda) d\lambda$
is supported in $t\leq 0$.  Since, in the sense of distributions,
$$\int_{-\infty}^\infty e^{-i\lambda t} g'(\lambda) d\lambda = it \int
_{-\infty}^\infty e^{-i\lambda t} g(\lambda) d\lambda ,$$
the distribution $\int_{-\infty}^\infty e^{-i\lambda t} g'(\lambda) d\lambda$
is supported in $t\leq 0$ as well.
Hence, for $t>0$
$$
v(t)  =  2\pi i \left( -\sum_{\lambda_j \in \mcr\cap \Lambda_0} (
e^{-i\overline{\lambda_j} t} +e^{i \lambda_j t})
+  \sum_{\lambda_j \in \mcr\cap \Lambda_{-1}}(e^{-i \lambda_j t} +
e^{i \overline{\lambda_j} t}) \right),\; t>0.$$
\end{proof}

The following theorem is a Poisson formula for resonances in even dimensions,
complementary to that of \cite{zworskieven}.  The integral appearing here may 
be thought of as an error or remainder term.  Lemma \ref{l:rescontisbig} 
uses Lemma \ref{l:argchange} to bound its contribution in our application,
the proof of Theorem \ref{thm:lowerbd3}.  
\begin{thm} \label{thm:poissonformula}
Let $d$ be even, and $u$ denote the distribution defined by
 (\ref{eq:wavetraceformal}) for a 
self-adjoint black-box type perturbation.
Then, for $t\not = 0$,
\begin{multline*}
u(t) = \sum_{\lambda_j \in \mcr\cap \Lambda_{-1}}\left( e^{-i \lambda_j |t|}
+ e^{i\overline{ \lambda_j} |t|} \right)
+ \sum_{\substack{-\sigma_j^2\in \sigma_{p}(P)\cap(-\infty,0)\\
\sigma_j>0}}(e^{\sigma_j |t|}- e^{-\sigma_j |t|}) \\
+ \sum_{\substack{\mu_l^2 \in \sigma_{p}(P)\cap(0,\infty)\\
\mu_l>0}}(e^{i \mu_l t}+ e^{-i \mu_l t}) 
- \frac{1}{2\pi i} \int_0^\infty \left( e^{-i\lambda |t|} 
\frac{s'(\lambda e^{i\pi})}{s(\lambda e^{i\pi})}+ e^{i\lambda |t|} \frac{s'(\lambda e^{-i\pi})}{s(\lambda e^{-i\pi})}\right) d\lambda  
+m(0).
\end{multline*} 
Here $m(0)$ is the multiplicity of $0$ as a pole of the 
resolvent of $P$, chosen to make the Birman-Krein formula (\ref{eq:birman-krein})
 correct, and $\sigma_p(P)$ is the point spectrum of $P$.
\end{thm}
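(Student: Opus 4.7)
My plan is to combine Lemma \ref{l:vinpoles}, which evaluates the distribution $v(t)$ as an explicit sum over resonances in $\mcr\cap(\Lambda_{-1}\cup\Lambda_0)$, with an even-dimensional Birman--Krein representation of $u(t)$ in terms of the scattering determinant $s(\lambda):=\det S(\lambda)$. The link between the two is the splitting
\[
\frac{f'(\lambda)}{f(\lambda)}=\frac{s'(\lambda)}{s(\lambda)}-\frac{d}{d\lambda}\log s(\lambda e^{i\pi}),
\]
which decomposes the integrand of (\ref{eq:v}) into a physical-sheet piece, to be matched with the Birman--Krein integrand for $u(t)$, and an off-sheet piece, to be matched with the integral remainder in Theorem \ref{thm:poissonformula}.

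First, I would unfold (\ref{eq:v}) over the two rays $\arg\lambda=0$ and $\arg\lambda=-\pi$ and change variables on the negative half-line, picking up a factor $e^{i\pi}=-1$ through the chain rule whenever the derivative hits $s(\lambda e^{i\pi})$. A direct computation then yields
\[
v(t) = \int_0^\infty(e^{-i\lambda t}+e^{i\lambda t})\frac{s'(\lambda)}{s(\lambda)}\, d\lambda + \int_0^\infty e^{-i\lambda t}\frac{s'(\lambda e^{i\pi})}{s(\lambda e^{i\pi})}\, d\lambda + \int_0^\infty e^{i\lambda t}\frac{s'(\lambda e^{-i\pi})}{s(\lambda e^{-i\pi})}\, d\lambda,
\]
so that the physical-sheet integral equals $v(t)$ minus the two off-sheet integrals, the latter being exactly $(-2\pi i)$ times the integral remainder appearing in the theorem.

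Next, I would invoke an even-dimensional Birman--Krein formula in the black-box framework: for $t>0$, $u(t)$ equals the sum of the negative-eigenvalue exponentials $\sum_{\sigma_j>0}(e^{\sigma_j t}-e^{-\sigma_j t})$, the positive-eigenvalue oscillations $\sum_{\mu_l>0}(e^{i\mu_l t}+e^{-i\mu_l t})$, the constant $m(0)$ associated to the residue of $R(\lambda)$ at $\lambda=0$, plus $(2\pi i)^{-1}\int_0^\infty(e^{-i\lambda t}+e^{i\lambda t})s'(\lambda)/s(\lambda)\, d\lambda$. Substituting the previous expression for $v(t)$ gives $u(t)-(\text{eigenvalue sums})-m(0) = (2\pi i)^{-1}v(t) + (\text{remainder integral of the theorem})$. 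I would then substitute Lemma \ref{l:vinpoles} to rewrite $(2\pi i)^{-1}v(t)$ for $t>0$ as
\[
\sum_{\lambda_j\in\mcr\cap\Lambda_{-1}}(e^{-i\lambda_j t}+e^{i\overline{\lambda_j}t}) - \sum_{\lambda_j\in\mcr\cap\Lambda_0}(e^{-i\overline{\lambda_j}t}+e^{i\lambda_j t}),
\]
and argue, by deforming the contours in the two off-sheet integrals through $\Lambda_0$, that the residues picked up at the resonances $\lambda_j\in\Lambda_0\cap\mcr$ exactly reproduce the second, auxiliary sum with the opposite sign, so that it cancels and only the $\Lambda_{-1}$ sum survives. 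The extension from $t>0$ to $t\neq 0$ is then automatic by evenness of $u$, replacing $t$ by $|t|$ in the exponentials.

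The chief obstacle is the bookkeeping of sheet conventions in the logarithmic cover $\Lambda$ and the careful treatment of the boundaries $\arg\lambda=0,\pm\pi$. Positive eigenvalues of $P$ produce singularities of $s(\lambda)$ on these boundaries, so their contributions appear simultaneously in the Birman--Krein eigenvalue sum, in the contour deformation that eliminates the $\sum_{\Lambda_0}$ sum, and in the off-sheet integrands; moreover, the $m(0)$ term requires additional care because $\det S(\lambda)\to 1$ at $\lambda=0$ only up to a logarithmic expansion (\cite[Section 6]{chobstacle}). Verifying that all of these contributions add up with the signs and multiplicities claimed in Theorem \ref{thm:poissonformula} is the most delicate part of the argument.
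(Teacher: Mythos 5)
Your overall architecture matches the paper's: combine a Birman--Krein representation of $u(t)$ with the distribution $v(t)$ from (\ref{eq:v}), split $v$ into a physical-sheet integral and an off-sheet integral, solve for the Birman--Krein integral, and substitute Lemma \ref{l:vinpoles}. However, there are two concrete problems in how you close the argument.

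First, you have misstated the Birman--Krein formula: you write the negative-eigenvalue contribution as $\sum_{\sigma_j>0}(e^{\sigma_j t}-e^{-\sigma_j t})$, but the correct eigenvalue term in (\ref{eq:birman-krein}) is $\sum_{\sigma_j>0}(e^{\sigma_j|t|}+e^{-\sigma_j|t|})$, with a plus sign. The sign flip to a minus is precisely the effect that the $\Lambda_0$ sum from Lemma \ref{l:vinpoles} is supposed to produce, so by writing the Birman--Krein input with a minus you have in effect assumed the conclusion.

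Second, and more seriously, the mechanism you propose for disposing of the $\Lambda_0$ sum is a contour deformation of the off-sheet integrals through $\Lambda_0$, collecting residues. This is not what is needed, and if you did carry it out (with the correct Birman--Krein formula) you would recover $\sum(e^{\sigma_j|t|}+e^{-\sigma_j|t|})$ rather than $\sum(e^{\sigma_j|t|}-e^{-\sigma_j|t|})$. The paper's observation is simpler and purely algebraic: any $\lambda_j\in\mcr\cap\Lambda_0$ is a pole on the physical sheet, hence corresponds to a negative eigenvalue $\lambda_j^2=-\sigma_j^2<0$, i.e. $\lambda_j=i\sigma_j$. Then
\[
e^{-i\overline{\lambda_j}t}+e^{i\lambda_j t}=2e^{-\sigma_j t},
\]
so the $-\sum_{\Lambda_0}$ term from Lemma \ref{l:vinpoles} contributes $-2\sum e^{-\sigma_j|t|}$, which when added to the Birman--Krein term $\sum(e^{\sigma_j|t|}+e^{-\sigma_j|t|})$ gives exactly $\sum(e^{\sigma_j|t|}-e^{-\sigma_j|t|})$. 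No residue computation is involved, no holomorphic continuation or decay estimates on $s(\lambda e^{\pm i\pi})$ in $\Lambda_0$ are needed, and the off-sheet integral stays untouched as the remainder. Replacing your contour-deformation step with this identification, and correcting the sign in the Birman--Krein input, would bring your argument in line with the paper's proof.
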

Before proving the theorem, we note that alternatively we could write
$$ \sum_{\lambda_j \in \mcr\cap \Lambda_{-1}}\left( e^{-i \lambda_j |t|}
+ e^{i\overline{ \lambda_j} |t|} \right)+ \sum_{\substack{\mu_l^2 \in \sigma_{p}(P)\cap(0,\infty)\\
\mu_l>0}}(e^{i \mu_l t}+ e^{-i \mu_l t}) 
=  \sum_{\lambda_j \in \mcr, -\pi<  \arg \lambda_j\leq 0 }\left( e^{-i \lambda_j |t|}
+ e^{i\overline{ \lambda_j} |t|} \right)$$
using that if $\mu_l^2>0$ is an eigenvalue of $P$, then $|\mu_l|$ is 
a pole of $R(\lambda)$ and hence, by our convention, an element of $\mcr$.
\begin{proof} Set $s(\lambda)=\det S(\lambda)$.
By the Birman-Krein formula, 
\begin{multline}\label{eq:birman-krein}u(t) =
\frac{1}{2\pi i} \int_0^\infty (e^{-i\lambda t} + e^{-i\lambda t})
\frac{s'(\lambda)}{s(\lambda)}d\lambda  
+ \sum_{\substack{-\sigma_j^2\in \sigma_{p}(P)\cap(-\infty,0)\\
\sigma_j>0}}(e^{\sigma_j |t|}+ e^{-\sigma_j |t|}) \\ 
+ \sum_{\substack{\mu_l^2 \in \sigma_{p}(P)\cap(0,\infty)\\
\mu_l>0}}(e^{i \mu_l t}+ e^{-i \mu_l t}) 
 +m(0).
\end{multline}
Using the same convention as 
discussed after (\ref{eq:v}) and the definition of $f$ (\ref{eq:f}),
 we write the distribution $v(t)$ from (\ref{eq:v}) as
\begin{align}\label{eq:vexpand}
v(t) & =\int_{-\infty}^\infty e^{-i\lambda t}\left( 
\frac{s'(\lambda)}{s(\lambda)}+ \frac{s'(e^{i\pi} \lambda)}{s(e^{i\pi} \lambda)}
\right)d\lambda \nonumber \\
& = \int_0^\infty 
 ( e^{-i\lambda t } +e^{i\lambda t}) \frac{s'(\lambda)}{s(\lambda) } d\lambda
+ \int_0^\infty \left( e^{-i\lambda t} 
\frac{s'(\lambda e^{i\pi})}{s(\lambda e^{i\pi})}+ e^{i\lambda t} \frac{s'(\lambda e^{-i\pi})}{s(\lambda e^{-i\pi})}\right) d\lambda 
\end{align}
where the second equality follows by a change of variable for the integral
over $(-\infty,0)$. 
The first integral on the right hand side is $2\pi i$ times the first 
term on the right hand side 
in (\ref{eq:birman-krein}).  Solving (\ref{eq:vexpand})
for the integral in (\ref{eq:birman-krein}) and
using Lemma \ref{l:vinpoles} gives, for $t> 0$,
\begin{align} \label{eq:intsum}
\frac{1}{2\pi i} \int_0^\infty ( e^{-i\lambda t } +e^{i\lambda t}) \frac{s'(\lambda)}{s(\lambda) } d\lambda   & 
= \frac{-1}{2\pi i}\int_0^\infty \left( e^{-i\lambda t} 
\frac{s'(\lambda e^{i\pi})}{s(\lambda e^{i\pi})} + e^{i\lambda t} \frac{s'(\lambda e^{-i\pi})}{s(\lambda e^{-i\pi})}\right) d\lambda \nonumber \\ & \hspace{5mm}
+ \sum_{\lambda_j \in \mcr\cap \Lambda_{-1}}\left( e^{-i \lambda_j t}
+ e^{i\overline{ \lambda_j} t} \right) - \sum_{\lambda_j \in \mcr\cap \Lambda_{0}}\left( e^{-i \overline{\lambda_j} t}
+ e^{i \lambda_j t} \right).
\end{align}
 Notice that 
if $\lambda_j\in \Lambda_0\cap \mcr$, then 
$\lambda_j^2 \in \sigma_{p}(P)\cap(-\infty,0)$.  Using this and Lemma 
\ref{l:vinpoles} proves the theorem for $t>0$.  To prove the theorem
for $t< 0$, we note that $u$ is a distribution which is even in $t$.
\end{proof}

\subsection{Comparison of Theorem \ref{thm:poissonformula} to 
other Poisson formulae for resonances}\label{s:comparison}
We briefly compare the result of Theorem \ref{thm:poissonformula} to
earlier Poisson formulae, both in odd and even dimensions.

We note that the proof of Theorem \ref{thm:poissonformula} can, with 
a small modification, be adapted to prove the {\em odd}-dimensional
Poisson formula.  In the  generality of
the  black-box setting we consider here, this is due to  Sj\"ostrand-Zworski
\cite{sj-zwlbII},
but it follows
 earlier work for obstacle scattering by Lax-Phillips
\cite{l-p}, Bardos-Guillot-Ralston \cite{b-g-r}, and Melrose 
\cite{melrosetrace,melrosepoly}
for increasingly large sets of $t\in \Real$.  The proof we describe here
is not very different from, but a bit less direct than, that given in
 \cite{zwpfodd}.  
The value of including this particular variant of the proof of the 
odd-dimensional result is that it shows the consistency of our methods
with the trace formula of 
\cite{b-g-r,l-p,melrosetrace,melrosepoly,sj-zwlbII} in odd dimensions.

Most of our proof of Theorem \ref{thm:poissonformula} is not dimension-dependent.
In fact, the Birman-Krein formula holds in both even and odd dimensions.
We use the distribution $v$ defined in the proof of Theorem \ref{thm:poissonformula}, and note that the computation of $v(t)$, $t>0$ in Lemma \ref{l:vinpoles}
holds in odd dimensions as well, where we make the (natural) identification
of $\Lambda_0$ with the complex upper half plane, and $\Lambda_{-1}$ with
the lower half plane, and use the bound on $g$ proved in \cite{zwpfodd}.
 In {\em odd} dimension, 
\begin{align}
v(t) & =\int_{-\infty}^\infty e^{-i\lambda t}\left( 
\frac{s'(\lambda)}{s(\lambda)}+ \frac{s'(e^{i\pi \lambda})}{s(e^{i\pi} \lambda)}
\right)d\lambda \nonumber \\
& = \int_0^\infty 
 ( e^{-i\lambda t } +e^{i\lambda t}) \frac{s'(\lambda)}{s(\lambda) } d\lambda
+ \int_0^\infty \left( e^{-i\lambda t} 
\frac{s'(\lambda e^{i\pi})}{s(\lambda e^{i\pi})}+ e^{i\lambda t} \frac{s'(\lambda e^{-i\pi})}{s(\lambda e^{-i\pi})}\right) d\lambda  \nonumber \\
& = \int_0^\infty (e^{-i\lambda t } +e^{i\lambda t}) \frac{s'(\lambda)}{s(\lambda) } d\lambda
+ \int_0^\infty ( e^{-i\lambda t} + e^{i \lambda t}) 
\frac{s'(\lambda e^{i\pi})}{s(\lambda e^{i\pi})} d\lambda 
\end{align}
where for the last equality we used $s(\lambda e^{i\pi}) = s(\lambda e^{-i\pi})$
in  odd dimensions.  But in odd dimensions,
$s(\lambda e^{i\pi}) s(\lambda )=1$, so
that $s'(\lambda)/s(\lambda)= 
s'(\lambda e^{\pm i \pi})/ s(\lambda e^{\pm i \pi})$. Hence
for odd dimensions 
$$v(t) = 2 \int_0^\infty (e^{-i\lambda t } +e^{i\lambda t}) \frac{s'(\lambda)}{s(\lambda) } d\lambda.$$
Dividing both sides by $2$ and then continuing to follow the 
proof from the even-dimensional case gives, for $t\not =0$,
\begin{multline} u(t) = \frac{1}{2}  
\sum_{\lambda_j \in \mcr, -\pi<\arg \lambda_j<0} ( e^{-i \lambda_j |t|} + e^{i\overline{\lambda}_j|t|})
+ \sum_{\substack{-\sigma_j^2\in \sigma_{p}(P)\cap(-\infty,0)\\
\sigma_j>0}}e^{\sigma_j |t|}
\\
+  \sum_{\substack{\mu_l^2 \in \sigma_{p}(P)\cap(0,\infty)\\
\mu_l > 0}}(e^{i \mu_l t}+ e^{-i \mu_l t}) + m(0),\; t\not = 0.
\end{multline}
  Noting
that in odd dimensions $\lambda_j$ is a resonance if and only if $-\overline{\lambda}_j$
is a resonance
gives
$$u(t) = \sum_{\lambda_j\in \mcr,\; \lambda_j \not =0}e^{-i\lambda_j |t|}+ m(0),
\; t\not = 0$$
showing the consistency with the odd-dimensional Poisson formula.

Now we return to the case of even dimension $d$.  Theorem 1 of \cite{zworskieven} is, when stated using our notion
\begin{thm}(\cite[Theorem 1]{zworskieven} adapted)\label{thm:zworski} Let 
$d$ be even, $P$ be a self-adjoint
 operator satisfying
the black-box conditions, $u$ the wave trace
defined in (\ref{eq:wavetraceformal}), and $0<\rho<\pi$. 
 Let $s(\lambda)=\det S(\lambda)$,
and $\psi \in C_c^\infty(\Real;[0,1])$ be equal to $1$ near $0$.  Then
\begin{multline*}
u(t) = \sum_{\lambda_j \in \mcr, -\rho/2 <\arg \lambda \leq 0}(e^{-i\lambda_j|t|}
+ e^{i\overline{\lambda_j}|t|}) + 
\sum_{-\sigma_j^2\in\sigma_{p}(P)\cap(-\infty,0), \sigma_j >0}
e^{\sigma_j |t|} + m(0) \\  + 
\frac{1}{\pi i}\int_0^\infty
\psi(\lambda)\frac{s'(\lambda)}{s(\lambda)} \cos(t\lambda)d\lambda 
+ v_{\rho,\psi}(t),\; t\not = 0
\end{multline*}
with 
$$v_{\rho,\psi}\in C^\infty(\Real\setminus\{0\}),\; \partial ^k_t v_{\rho,\psi}(t)=O(t^{-N})\; \forall k,N,\; |t|\rightarrow \infty.$$
\end{thm}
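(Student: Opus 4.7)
The stated theorem is Zworski's Theorem 1 from \cite{zworskieven}, recast in the notation of the present paper; an independent proof is not strictly required, but let me sketch the approach one would take using the machinery developed above. The starting point is the Birman-Krein formula (\ref{eq:birman-krein}). After reducing to $t>0$ by the evenness of $u$, and splitting $\cos(t\lambda) = \tfrac{1}{2}(e^{it\lambda}+e^{-it\lambda})$, one inserts the cutoff $\psi$ to write the Birman-Krein integral as the sum of a piece against $\psi$ (precisely the integral displayed in the theorem) and a piece against $1-\psi$ supported away from $\lambda=0$.

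The plan is to evaluate this second piece by a contour-shift argument. For the $e^{-it\lambda}$ half I would deform the contour from $(0,\infty)$ downward into $\Lambda_{-1}$ along the ray $\arg\lambda=-\rho/2$, picking up residues of $s'/s$ at poles in the wedge $-\rho/2<\arg\lambda<0$; for the $e^{it\lambda}$ half I would exploit the scattering symmetry $s(\lambda)\overline{s(\bar\lambda)}=1$ (under which zeros of $s$ in $\Lambda_0$ correspond via $\lambda_j\leftrightarrow\overline{\lambda_j}$ to resonances $\lambda_j\in\Lambda_{-1}$) and shift upward into $\Lambda_0$, picking up residues at the $\overline{\lambda_j}$. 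Summed, the residues give exactly $\sum_{-\rho/2<\arg\lambda_j\le 0}(e^{-i\lambda_j|t|}+e^{i\overline{\lambda_j}|t|})$. The resonances on $\{\arg=0\}$ are in bijection with the positive $L^2$-eigenvalues $\mu_l^2$ and supply the Birman-Krein term $\sum(e^{i\mu_l t}+e^{-i\mu_l t})$, while negative $L^2$-eigenvalues (poles at $i\sigma_j\in\Lambda_0$) lie outside the swept sectors and so contribute directly from Birman-Krein: the $e^{\sigma_j|t|}$ piece is kept as stated, and the remaining rapidly decaying $e^{-\sigma_j|t|}$ is absorbed into $v_{\rho,\psi}$.

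The deformed contour integrals, now on the rays $\arg\lambda=\pm\rho/2$, are packaged as $v_{\rho,\psi}$. On these rays $|e^{\mp it\lambda}|=e^{-|t||\lambda|\sin(\rho/2)}$, so differentiating in $t$ pulls down polynomial factors in $|\lambda|$ that are crushed by the exponential decay, yielding both $C^\infty$-smoothness away from $t=0$ and the $O(|t|^{-N})$ bounds on all derivatives. The main obstacle — and the point at which the tools of this paper, rather than only Cauchy's theorem, are needed — is to supply a polynomial bound on $|s'(\lambda)/s(\lambda)|$ along the shifted contours outside small exceptional disks about the resonances. That bound is precisely what one gets by combining the factorization of $\det S(\lambda)$ constructed in the proof of Lemma \ref{l:frep} with the in-sector minimum-modulus estimate of Lemma \ref{l:lbawayzero}; once it is in hand the remainder is bookkeeping to combine the residues, the Birman-Krein eigenvalue terms, and the cutoff piece $\psi\,s'/s$ into the stated formula.
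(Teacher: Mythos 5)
The paper does not actually prove Theorem~\ref{thm:zworski}: it is a verbatim citation of \cite[Theorem~1]{zworskieven} recast in this paper's conventions, and the surrounding text in Section~\ref{s:comparison} is entirely about the dictionary between Zworski's setup and this one (the location of the physical half-plane, Zworski's ad hoc definition of $s'(\lambda)/s(\lambda)$ for $\Re\lambda<0$, and the consequent double-counting of near-axis resonances), not about proving the statement. So there is no ``paper's own proof'' to line your argument up against; the relevant comparison is with Zworski's original proof. Your sketch is in fact faithful to that proof: start from Birman--Krein, isolate the low-frequency piece with the cutoff $\psi$, deform the $1-\psi$ piece into sectors (downward for the $e^{-it\lambda}$ half, upward for the $e^{it\lambda}$ half, using $s(\lambda)\overline{s(\bar\lambda)}=1$ to identify the zeros of $s$ in $\Lambda_0$ with conjugates of resonances in $\Lambda_{-1}$), collect residues to get the resonance sum, and package the ray integrals as $v_{\rho,\psi}$, whose smoothness and $O(|t|^{-N})$ decay follow by integration by parts against the exponential factor $e^{-|t||\lambda|\sin(\rho/2)}$. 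The needed polynomial bounds on $s'/s$ along the shifted contours are indeed the same kind of thing that Lemmas~\ref{l:lbs}, \ref{l:lbawayzero}, and \ref{l:frep} produce, although Zworski's original proof obtains them with its own (Cartan/Caratheodory-type) estimates since it works in disks rather than sectors. Two points you gloss over that deserve a sentence in a full write-up: (i) the deformed rays $\arg\lambda=\mp\rho/2$ must avoid the resonances; this is handled either by choosing $\rho$ so that the rays are resonance-free, or by a standard limiting argument over a dense set of admissible angles; (ii) the boundary term at $\lambda=0$ after integration by parts vanishes to all orders only because of the $1-\psi$ cutoff, which is precisely why the $\psi$-weighted low-frequency integral must be kept explicit rather than absorbed into $v_{\rho,\psi}$ --- in even dimensions $s'/s$ has a logarithmic singularity at $0$ that obstructs any better behavior there.
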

  For the reader comparing
the statement of \cite[Theorem 1]{zworskieven} with this statement, we note
that there are several differences.   One is caused
 by the convention of the location
of the physical half plane (for us, $0<\arg \lambda<\pi$; for 
\cite{zworskieven}, $-\pi<\arg \lambda<0$) and the consequential difference in the location of the resonances.  Another is caused Zworski's convention (see the first paragraph of 
\cite[Section 2]{zworskieven}; the diagram should have a cut extending
along the entire imaginary axis)  {\em defining}, for $\Re \lambda<0$, 
$s'(\lambda)/s(\lambda)= -
\overline{s'(-\overline{\lambda})/s(-\overline{\lambda})}$.
 This means that each resonance with $-\rho/2<\arg 
\lambda_j\leq 0$ actually contributes twice to the sum which appears in
\cite[Theorem 1]{zworskieven}-- once  $e^{-i|t|\lambda_j}$,
and then again  $e^{-(-i|t|\overline{\lambda_j})}$.  

In each of Theorems \ref{thm:poissonformula} and  \ref{thm:zworski},
 any term which does not arise from an eigenvalue
or resonance may be considered part of a ``remainder.''  The remainder terms 
from Theorem \ref{thm:zworski} are smooth away from $t=0$; and one ($v_{\rho,\psi}$) is well-controlled
when $|t|\rightarrow \infty$.   The smoothness of the remainder in \cite[Theorem 1]{zworskieven} means that Zworski's Poisson formula can be used to show that 
if the wave trace
has singularities at a {\em nonzero} time then there  is a lower bound
 on the
number of resonances in sectors near the real axis, see \cite{zworskieven}.
See 
\cite{zworskiremark} for another
application of the Poisson formula of \cite{zworskieven}, also related to the singularities
of the wave trace away from $0$.

The 
integral appearing in Theorem \ref{thm:poissonformula} does 
not, in general, yield a term which is smooth in $t$, even away from $t=0$.
However, the remainder term has the advantage of being in some sense
more explicit than that of Theorem \ref{thm:zworski}.
 As we shall
see in the next section, Lemma \ref{l:argchange} provides enough information
about the remainder in Theorem \ref{thm:poissonformula} to use
the singularity of $u(t)$ at $0$ to prove Theorem \ref{thm:lowerbd3},
and hence Theorems \ref{thm:lowerbd} and \ref{thm:lowerbd2}.

\section{Proof of Theorems \ref{thm:lowerbd} and  \ref{thm:lowerbd2}}
\label{s:proofofthm}

We first prove a more general theorem, Theorem \ref{thm:lowerbd3}
and then show that Theorems \ref{thm:lowerbd}
and \ref{thm:lowerbd2} satisfy the hypotheses of Theorem \ref{thm:lowerbd3}.

\begin{thm} \label{thm:lowerbd3} Let the dimension $d$ be even and 
let $P$ be a black-box operator satisfying the conditions of \cite{sj-zw};
see Section \ref{ss:setup}.
Suppose there is
an $R_1>R_0$ so that for all $b>a>R_1$, if $\chi \in C_c^\infty(\Real^d)$
has support in $\{x\in \Real^d : a<|x|<b\}$ then there is a constant
$C$ depending on $\chi$ so that 
\begin{equation} \label{eq:resolventdecay2}
\| \chi R(\lambda) \chi\| \leq C/\lambda\; \text{ $\lambda 
\in  (1,\infty)$}.
\end{equation}
Let $n_e(r)$ denote the eigenvalues of $P$ of norm at most $r^2$, 
and assume that
$n_e(r) +n_{-1}(r) \leq C'(1+r^d)$ for some $C'>0$. 
Let $u$ be the distribution defined in (\ref{eq:wavetraceformal}).
Suppose there is a constant $\alpha \not = 0$ and $\epsilon_1,\; \epsilon_2>0$
 so that 
\begin{equation}\label{eq:leadingsing}
t^{d-\epsilon_1}\left( u(t) -\alpha|D_t|^{d-1}\delta_0(t) \right) 
\in C^0([0,\epsilon_2]).
\end{equation}
  Then there is a constant $C_0>0$ so that 
$$r^d/C_0\leq n_{-1}(r)+  n_e(r)\; \text{for}\; r\gg 1.$$
\end{thm}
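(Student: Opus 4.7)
The plan is to combine the Poisson formula (Theorem \ref{thm:poissonformula}) with the singularity hypothesis (\ref{eq:leadingsing}), the scattering-matrix bounds in Lemma \ref{l:argchange}, and the Tauberian-type lower-bound framework of Sj\"ostrand--Zworski \cite{sj-zwlbII}. First I would rewrite the identity of Theorem \ref{thm:poissonformula} in the form
$$u(t) = T(t) + E(t), \qquad t \in (0,\epsilon_2),$$
where, with $s(\lambda)=\det S(\lambda)$,
$$T(t) = \sum_{\lambda_j\in\mcr\cap\Lambda_{-1}}\!\!\bigl(e^{-i\lambda_j|t|}+e^{i\overline{\lambda_j}|t|}\bigr) + (\text{eigenvalue terms}),$$
$$E(t) = -\frac{1}{2\pi i}\int_0^\infty\!\!\left(e^{-i\lambda|t|}\frac{s'(\lambda e^{i\pi})}{s(\lambda e^{i\pi})} + e^{i\lambda|t|}\frac{s'(\lambda e^{-i\pi})}{s(\lambda e^{-i\pi})}\right)\!d\lambda + m(0).$$
Hypothesis (\ref{eq:leadingsing}) identifies the leading singularity of $u$ at $0$ as $\alpha|D_t|^{d-1}\delta_0(t)$. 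Since $d$ is even, this distribution is, up to a nonzero universal constant $c_d$, the finite-part distribution $|t|^{-d}$; in particular its restriction to $(0,\epsilon_2)$ is the smooth function $\alpha c_d\,t^{-d}$. The strategy is then to show that the sum $T(t)$ must itself reproduce this $t^{-d}$ tail, which will force the counting function to grow like $r^d$.

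The next step is to estimate $E(t)$ for $t>0$ small. By Lemma \ref{l:argchange},
$$|\log s(\lambda e^{\pm i\pi})| \leq C(1+\lambda^{d-1}),\qquad \lambda>0,$$
so writing $s'/s = d/d\lambda\,\log s$ and integrating by parts in $\lambda$, the oscillatory integral defining $E$ is realized as a boundary-value Fourier transform of a polynomially bounded function of order $d-1$. This yields, distributionally on $(0,\epsilon_2)$, a bound of the form $E(t)=O(t^{-(d-1)-\delta})$ for any $\delta>0$; in particular $E$ is strictly less singular at $t=0$ than the $t^{-d}$ tail of $u$. Consequently, $T$ inherits the leading singularity $\alpha c_d t^{-d}$ at $0^+$, modulo controlled errors.

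With $T$ now identified as an exponential sum over $\overline{\Lambda_{-1}}\cup i\Real\cup \Real$ whose exponents lie in a counting-bounded set ($n_e(r)+n_{-1}(r)\leq C'(1+r^d)$ by hypothesis) and which carries a nontrivial $t^{-d}$ singularity at $0$, I would invoke the lower-bound machinery of \cite[Section 3]{sj-zwlbII}. Concretely, testing $T$ against a rescaled bump $\varphi_r(t)=r\varphi(rt)$ with $\varphi\in C_c^\infty(\Real_+)$, the singularity produces a contribution of order $r^d$ on the left side, while the exponential sum on the right is controlled by the number of $\lambda_j$ with $|\lambda_j|\lesssim r$, times bounded factors; the combined polynomial upper bound and the forced $r^d$ lower bound on the singularity side then yield $n_{-1}(r)+n_e(r)\geq r^d/C_0$ for $r\gg 1$.

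The main obstacle is the rigorous distributional control of $E(t)$ near $t=0^+$: the polynomial bound on $\log s(\lambda e^{\pm i\pi})$ furnished by Lemma \ref{l:argchange} must be converted into a bound on $E$ that is strictly milder than $t^{-d}$, and it is precisely here that the imaginary-part bound on $\log\det S(\tau e^{i\pi})$ from Lemma \ref{l:argchange} (rather than just the modulus bound) is indispensable -- otherwise the contribution of $\arg s(\lambda e^{\pm i\pi})$ to the oscillatory integral could in principle be as singular as the leading term and the Tauberian argument would fail.
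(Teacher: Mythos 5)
Your proposal follows essentially the same route as the paper: decompose $u$ via Theorem \ref{thm:poissonformula} into a resonance/eigenvalue exponential sum plus a remainder, use Lemma \ref{l:argchange} to control the remainder's contribution (the paper's Lemma \ref{l:rescontisbig}), use (\ref{eq:leadingsing}) to lower-bound the pairing of $u$ with a rescaled bump by $\gamma^{-d}$ (the paper's Lemma \ref{l:singnear0}), and conclude via the Tauberian argument of Sj\"ostrand--Zworski (the paper cites \cite[Proposition 4.2]{sj-zwlbII}). One small caution: your intermediate claim that $E(t)=O(t^{-(d-1)-\delta})$ pointwise is not what Lemma \ref{l:argchange} actually furnishes -- a polynomial bound $|g_2(\tau)|\leq C(1+|\tau|^{d-1})$ on a branch of $\log\det S(\tau e^{i\pi})$ controls the Fourier transform of $g_2'$ only weakly, through pairings; the paper avoids the pointwise statement entirely by integrating by parts onto $\hat\phi$ and estimating $\bigl|\int\gamma\hat\phi'(\gamma\tau)g_2(\tau)\,d\tau\bigr|\leq C\gamma^{-(d-1)}$, which is exactly the bound fed into \cite{sj-zwlbII}. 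Since you do ultimately test against $\phi_\gamma$, this is an issue of phrasing rather than a gap, but the pairing formulation is the one that actually closes.
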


Now we specialize to the case of $P$ as in the statement of Theorem 
\ref{thm:lowerbd3}.  

Set, for $t \not = 0$,
\begin{equation}\label{eq:w}
w(t)=u(t)- \sum_{\lambda_j \in \mcr\cap \Lambda_{-1}}\left( e^{-i \lambda_j |t|}
+ e^{i\overline{ \lambda_j} |t|} \right)-  \sum_{\substack{\mu_l^2 \in \sigma_{p}(P)\cap(0,\infty)\\
\mu_l>0}}(e^{i \mu_l t}+ e^{-i \mu_l t}) .
\end{equation}

\begin{lemma}\label{l:rescontisbig}
Let $\phi \in C_c^\infty(\Real_+)$
and set $\phi_\gamma(t)= \frac{1}{\gamma}\phi \left( \frac{t}{\gamma}\right)$, $\gamma >0$.
Then with $w$ as defined by 
(\ref{eq:w}) there is a constant $C>0$ so that 
$$\left| \int w(t) \phi_\gamma(t) dt\right| \leq C \gamma^{-(d-1)}\; 
\text{for $\gamma\in (0,1]$}.$$
\end{lemma}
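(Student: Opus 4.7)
The plan is to substitute the expression for $u(t)$ from Theorem \ref{thm:poissonformula} into the definition of $w(t)$. After cancellation of the $\mcr \cap \Lambda_{-1}$ sum and the positive-eigenvalue sum, one is left with
$$w(t) = \sum_{\substack{-\sigma_j^2 \in \sigma_p(P)\cap(-\infty,0)\\ \sigma_j > 0}}(e^{\sigma_j|t|}-e^{-\sigma_j|t|}) + m(0) + I(t),$$
where $I(t)$ denotes the non-physical-sheet integral appearing in Theorem \ref{thm:poissonformula}. I will pair each of the three pieces against $\phi_\gamma$ and bound them separately.

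The first two pieces are easy. Since $P$ is lower semi-bounded, each $\sigma_j$ lies in a fixed bounded interval; combined with the hypothesis $n_e(r) \le C'(1+r^d)$, this forces the negative eigenvalues to be finite in number. Thus the eigenvalue sum is a smooth function of $t$ bounded uniformly on $\supp \phi_\gamma \subset [0,b]$ for $\gamma \in (0,1]$, and both this sum and the constant $m(0)$ contribute $O(\|\phi\|_{L^1}) = O(1)$ to the pairing.

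For $I(t)$ I will use the fact that $\supp \phi_\gamma \subset (0,\infty)$ to drop the absolute values on $t$. By the discussion preceding Lemma \ref{l:vinpoles}, $\det S(\lambda e^{\pm i\pi})$ is nonvanishing for $\lambda \in (0,\infty)$ and tends to $1$ as $\lambda \downarrow 0$, so continuous branches $F(\lambda) = \log \det S(\lambda e^{i\pi})$ and $G(\lambda) = \log \det S(\lambda e^{-i\pi})$ are well-defined on $(0,\infty)$ with $F(0^+) = G(0^+) = 0$, serving (up to sign) as antiderivatives of $s'(\cdot e^{\pm i \pi})/s(\cdot e^{\pm i \pi})$. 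The crucial input is Lemma \ref{l:argchange}, which yields the polynomial bounds $|F(\lambda)| + |G(\lambda)| \le C(1+\lambda^{d-1})$. Exchanging orders of integration (legitimate since $\widehat{\phi_\gamma} = \widehat\phi(\gamma\,\cdot)$ is Schwartz while $s'/s$ is a tempered distribution on $(0,\infty)$) and then integrating by parts in $\lambda$ moves the derivative onto $\widehat{\phi_\gamma}$, with vanishing boundary contributions at $0$ (because $F(0^+) = G(0^+) = 0$) and at infinity (because $\widehat{\phi_\gamma}$ is rapidly decreasing while $F, G$ grow only polynomially).

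What remains is an integral of the shape $\gamma \int_0^\infty \widehat\phi'(\pm \gamma\lambda)\,F(\lambda)\,d\lambda$ (and a similar one for $G$). Substituting $\mu = \gamma\lambda$ and using the bound on $F, G$ converts this into $\int_0^\infty |\widehat\phi'(\pm\mu)|\,C(1 + \gamma^{-(d-1)}\mu^{d-1})\,d\mu$, which is $O(\gamma^{-(d-1)})$ for $\gamma \in (0,1]$ because every moment of the Schwartz function $\widehat\phi'$ is finite. Combined with the $O(1)$ bounds from the eigenvalue and constant pieces, this yields the claim. The main technical hurdle I expect is justifying the Fubini exchange and integration by parts rigorously, since the defining $\lambda$-integral for $I(t)$ is only oscillatory in $t$; the clean way around this is to interpret everything through the pairing with the Schwartz function $\widehat{\phi_\gamma}$ and to work directly with the continuous antiderivatives $F, G$ supplied by Lemma \ref{l:argchange}.
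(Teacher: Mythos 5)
Your proof is correct and follows essentially the same route as the paper's: decompose $w$ via Theorem \ref{thm:poissonformula}, observe that the $m(0)$ and negative-eigenvalue terms contribute $O(1)$, pass the pairing of the integral term with $\phi_\gamma$ through the Fourier transform, integrate by parts to land the derivative on $\hat\phi$, and invoke the polynomial bound on the continuous logarithm from Lemma \ref{l:argchange}. The only cosmetic difference is that the paper reflects $g_1$ to a single function $g_2$ on all of $\Real$ before integrating by parts, whereas you keep the two one-sided branches $F,G$ on $(0,\infty)$ and kill the boundary term at $0$ using $F(0^+)=G(0^+)=0$; these are equivalent.
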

\begin{proof}
We use Theorem \ref{thm:poissonformula}.  We note that 
 $\int \phi_\gamma (t) m(0)dt= 
m(0)\int \phi(t)dt$ is independent of $\gamma$.   Moreover,
\begin{align}\label{eq:eigenvaluecont}
 \left|  \int \sum_{\substack{-\sigma_j^2\in \sigma_{p}(P)\cap(-\infty,0)\\
\sigma_j>0}}(e^{\sigma_j |t|}- e^{-\sigma_j |t|}) \phi_\gamma(t) dt \right|
& = \left|  \sum_{\substack{-\sigma_j^2\in \sigma_{p}(P)\cap(-\infty,0)\\\sigma_j>0}}
\int_0^\infty \phi(t) ( e^{\gamma \sigma_j t}-e^{-\gamma \sigma_j t}) dt \right|\\
\nonumber
& 
\leq C\; \text{for $\gamma \in (0,1]$}.
\end{align}

It remains to bound the term corresponding to the integral appearing in the 
Poisson formula of Theorem \ref{thm:poissonformula}.
Since for $\arg \lambda=0$ $\det S(\lambda e^{i\pi})\not =0$, 
there is a differentiable
 function $g_1$ defined on $(0,\infty)$ so that $s(\lambda e^{i\pi})= e^{g_1(\lambda)}$
when $\arg \lambda =0$.  This does not uniquely determine $g_1$.
Since $\lim _{\lambda \downarrow 0}s(\lambda e^{i\pi})=1$, we may choose $g_1$ 
to satisfy $\lim _{\lambda \downarrow 0}g_1(\lambda)=0$.
Using the relation $S^*(\overline{\lambda})S(\lambda)=I$, 
$$\frac{s'(\lambda e^{i\pi})}{s(\lambda e^{i\pi})}=-\overline{ \left( \frac{s'(\lambda e^{-i\pi})}{s(\lambda e^{-i\pi})}\right) }\; \text{ if $\arg \lambda =0$}.$$
Hence 
$$\frac{s'(\lambda e^{i\pi})}{s(\lambda e^{i\pi})}= - g_1'(\lambda),\;
\text{if}\;  \arg \lambda =0 \; \text{and}\;
\frac{s'(\lambda e^{-i\pi})}{s(\lambda e^{-i\pi})}= 
\overline{g}_1'(\lambda),
\; \text{if}\;  \arg \lambda =0.$$
Thus, for $t>0$
\begin{align}\label{eq:intwithg1}
 \int_0^\infty
\left( e^{-i\lambda t} 
\frac{s'(\lambda e^{i\pi})}{s(\lambda e^{i\pi})}+ e^{i\lambda t} \frac{s'(\lambda e^{-i\pi})}{s(\lambda e^{-i\pi})}\right) d\lambda 
=- \int_0^\infty
\left( e^{-i\lambda t} g'_1(\lambda)- e^{i\lambda t} \overline{g}_1'(\lambda) \right) d\lambda
\end{align}
For $\tau \in \Real$, set
$$g_2(\tau) = \left\{ \begin{array}{ll}
g_1(\tau)& \text{if $\tau>0$} \\
\overline{g}_1(-\tau) & \text{if $\tau<0$} \\
0& \text{if $\tau=0$}.
\end{array}
\right.
$$
Note that $g_2$ is continuous, with 
$$g_2'(\tau)= \left\{ \begin{array}{ll}
g_1'(\tau) & \text{if $\tau >0$}\\
-\overline{g}_1'(-\tau) & \text{if $\tau <0$.}
\end{array}
\right.
$$
From (\ref{eq:intwithg1}) and using the continuity of $g_2$,
$$\int_0^\infty
\left( e^{-i\lambda t} 
\frac{s'(\lambda e^{i\pi})}{s(\lambda e^{i\pi})}+ e^{i\lambda t} \frac{s'(\lambda e^{-i\pi})}{s(\lambda e^{-i\pi})}\right) d\lambda =-
\int_{-\infty}^\infty e^{-i \tau t} g_2'(\tau) d\tau =- \widehat{g'_2}(t).$$
Thus
\begin{align*} 
\int_{-\infty}^\infty \phi_\gamma(t) \int _0^\infty
\left( e^{-i\lambda t} 
\frac{s'(\lambda e^{i\pi})}{s(\lambda e^{i\pi})}+ e^{i\lambda t} \frac{s'(\lambda e^{-i\pi})}{s(\lambda e^{-i\pi})}\right) d\lambda dt 
& = -\int_{-\infty}^\infty \phi_\gamma(t) \widehat{g_2'}(t) dt \\
& = -\int_{-\infty}^\infty \hat{\phi} (\gamma \tau) g_2'(\tau) d\tau\\
& = \int_{-\infty}^\infty\gamma \hat{\phi}' (\gamma \tau) g_2(\tau) d\tau.
\end{align*}
By Lemma \ref{l:argchange}, there is a constant $C$ so that 
$|g_1(\lambda)|\leq C(1+|\lambda|^{d-1})$ for $\arg \lambda =0$. Thus
\begin{align}
\left| \int_{-\infty}^\infty\gamma \hat{\phi}' (\gamma \tau) g_2(\tau) d\tau
\right| & 
= \left| \int_{-\infty}^\infty \hat{\phi}' ( \tau) g_2(\tau/\gamma) d\tau
\right|\nonumber \\
& \leq \int_{-\infty}^\infty C(1+|\tau |)^{-d-1} (1+|\tau/\gamma|)^{d-1}d\tau
\nonumber \\
& \leq C \gamma^{-(d-1)} \int_{-\infty}^\infty (1+|\tau |)^{-2}d\tau \leq C 
\gamma^{-(d-1)}.
\end{align}
Together with Theorem \ref{thm:poissonformula},
  (\ref{eq:eigenvaluecont}),  and the boundedness of the contribution of $m(0)$, this proves the lemma. 
\end{proof}

\begin{lemma}\label{l:singnear0} Let $d$ be even,
and let $P$ and $u$ satisfy the hypotheses
of Theorem \ref{thm:lowerbd3}.
  Let $\phi\in C_c^{\infty}(\Real_+)$, $\phi \geq 0$, 
$\phi(1)\not =0$.  Then there are constants $c > 0$ and $\gamma_0>0$
so that 
$$\left| \int \phi_\gamma(t) u(t) dt \right| \geq c \; |\alpha| \gamma^{-d} \;
 \text{if $\gamma\in (0,\gamma_0]$}$$
where $\alpha$ is as in (\ref{eq:leadingsing}).
\end{lemma}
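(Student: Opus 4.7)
My plan is to exploit the explicit description of the leading singularity of $u$ at $t=0$ given by (\ref{eq:leadingsing}). Write $u(t) = \alpha|D_t|^{d-1}\delta_0(t) + v(t)$, where by hypothesis the distribution $v$ is represented on $(0,\epsilon_2]$ by a continuous function satisfying $|v(t)| \leq Ct^{-(d-\epsilon_1)}$. I will evaluate $\int \phi_\gamma(t)\,u(t)\,dt$ by treating the two pieces separately: the first will produce a main term of order exactly $\gamma^{-d}$, and the second will be a lower-order error of order $\gamma^{-(d-\epsilon_1)}$, so for $\gamma$ small enough the main term dominates.

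For the main term, I use that $|D_t|^{d-1}\delta_0$ is the inverse Fourier transform of $|\xi|^{d-1}$, hence is a tempered distribution on $\Real$ homogeneous of degree $-d$. Since $d$ is even, $d-1$ is a positive odd integer, and a one-dimensional Riesz-type Fourier computation gives
$$|D_t|^{d-1}\delta_0(t) = c_d|t|^{-d}\quad \text{for } t\neq 0,$$
where $c_d = -\pi^{-1}\Gamma(d)\sin((d-1)\pi/2) = (-1)^{d/2}(d-1)!/\pi$ is a nonzero real constant. Because $\phi\in C_c^\infty(\Real_+)$, $\supp\phi\subset [a,b]$ for some $0<a<b<\infty$, so for $\gamma_0$ small enough and $\gamma\in(0,\gamma_0]$ we have $\supp\phi_\gamma\subset(0,\epsilon_2)$. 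The change of variable $t=\gamma s$ then gives
$$\int \phi_\gamma(t)\,\alpha|D_t|^{d-1}\delta_0(t)\,dt = \alpha c_d\int_0^\infty \frac{\phi_\gamma(t)}{t^d}\,dt = \alpha c_d\,\gamma^{-d}\int_0^\infty \frac{\phi(s)}{s^d}\,ds,$$
and the final integral is strictly positive since $\phi\geq 0$ and $\phi(1)\neq 0$.

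For the error, applying the pointwise bound on $v$ and the same rescaling yields
$$\left|\int \phi_\gamma(t)\,v(t)\,dt\right| \leq C\int_{\gamma a}^{\gamma b}\gamma^{-1}|\phi(t/\gamma)|\,t^{-(d-\epsilon_1)}\,dt = C'\gamma^{-(d-\epsilon_1)},$$
which, since $\epsilon_1>0$, is $o(\gamma^{-d})$ as $\gamma\downarrow 0$. Combining the two contributions and shrinking $\gamma_0$ if necessary gives $|\int \phi_\gamma(t) u(t)\,dt| \geq c|\alpha|\gamma^{-d}$ for some $c>0$, as required. The only substantive point is verifying $c_d\neq 0$ so that the singular part genuinely contributes at order $\gamma^{-d}$; this follows at once from the explicit formula above, as $\Gamma(d)$ is a positive integer and $\sin((d-1)\pi/2)=\pm 1$ for even $d$. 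Everything else is a routine homogeneity-scaling computation.
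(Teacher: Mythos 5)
Your proof is correct and follows essentially the same route as the paper's: decompose $u$ into the singular term $\alpha|D_t|^{d-1}\delta_0$ and a remainder controlled by (\ref{eq:leadingsing}), show the first contributes a term of order exactly $\gamma^{-d}$ via homogeneity and $|D_t|^{d-1}\delta_0(t)=c_d t^{-d}$ for $t>0$, and bound the remainder by $O(\gamma^{-d+\epsilon_1})$. The only cosmetic difference is that you work out the explicit value of the nonzero constant $c_d$, whereas the paper merely asserts that such a constant $b\neq 0$ exists.
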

\begin{proof}
We recall the assumption that there are $\epsilon_1,\; \epsilon_2>0$ so that
\begin{equation}\label{eq:at0first}
t^{d-\epsilon_1}\left( u(t) -\alpha|D_t|^{d-1}\delta_0(t) \right) \in
 C^0([0,\epsilon_2]).
\end{equation}
There is a constant $b\not = 0$ so that for $t>0$, $|D_t|^{d-1}\delta_0(t)=
bt^{-d}$. Thus
\begin{equation}\label{eq:big}
\left| \int_{-\infty}^\infty \phi_\gamma(t)|D_t|^{d-1}\delta_0(t) dt \right|
= \left| b \int_0^\infty \frac{1}{\gamma} \phi(t/\gamma)t^{-d} dt\right|
= \gamma^{-d }\left| b\int_0^\infty \phi(t) t^{-d}  dt \right|
\geq c' \gamma^{-d}
\end{equation}
with $c'>0$.

Set $w_r(t)= u(t) -\alpha|D_t|^{d-1}\delta_0(t)$.
Since by (\ref{eq:at0first})
 $t^{d-\epsilon_1} w_r(t)$ is continuous for $t\in [0,\epsilon_2]$,
there is a constant $C$ so that $| \phi_\gamma(t) w_r(t)|\leq C t^{-(d-\epsilon_1)}
\phi_\gamma(t)$
when $\gamma>0$ is sufficiently small.
Thus for $\gamma>0$ sufficiently small,
 \begin{equation}\label{eq:smaller}
\left| \int \phi_\gamma(t) w_r(t)dt\right|
\leq \int C t^{-(d-\epsilon_1)}\phi_\gamma(t)dt \leq C \gamma^{-(d-\epsilon_1)}\int \phi_\gamma (t)dt
\leq C \gamma^{-d+\epsilon_1}.
\end{equation}
The lemma follows from (\ref{eq:at0first}), (\ref{eq:big}), and (\ref{eq:smaller}),
by choosing $\gamma_0>0$ sufficiently small.
\end{proof}

\vspace{2mm}
\noindent
{\em Proof of Theorem \ref{thm:lowerbd3}}.  
The theorem now follows almost immediately from an application of 
\cite[Propostion 4.2]{sj-zwlbII}.  Here we use, in the 
notation of that proposition,  $V(r)=r^{d}$.
\footnote{This $V$ has no relation to the potential $V$ in the statement
of Theorem \ref{thm:lowerbd2}.}  We use our assumption 
that $n_{-1}(r) =O(r^d)$ and and $n_e(r)=O(r^d)$ as $r\rightarrow \infty$.
   By Lemmas 
\ref{l:rescontisbig} and \ref{l:singnear0} the other conditions of the 
proposition are satisfied.
\qed

\vspace{2mm}
\noindent
{\em Proof of Theorems \ref{thm:lowerbd} and \ref{thm:lowerbd2}}.  If $M$ has no boundary,
then there is an $\epsilon>0$ and a  constant $\tilde{c}_d\not = 0 $ so that 
\begin{equation}\label{eq:at0}
t^{d-1}\left( u(t) -\tilde{ c}_d (\vol K -\vol B(0;R_0))|D_t|^{d-1}\delta_0(t) \right) \in C^\infty([0,\epsilon]),
\end{equation}
see \cite{hosf}.  In case $M$ has a boundary (in particular, if 
$M=\Real^d\setminus \mco$), that  (\ref{eq:at0}) holds follows from 
\cite{ivrii, melrose} or 
\cite[Prop. 29.1.2 and the proof of Prop. 29.3.3]{ho4}.

By results of Burq \cite[(8.5)]{burq} for the case of $\Real^d \setminus \mco$
or Cardoso-Vodev \cite{c-v} for $P=-\Delta_g$ on $M$, we have that 
(\ref{eq:resolventdecay2}) holds.
We note that with the assumptions we have made on  $\mco$ 
and $M$ the operator $P$ has 
no positive eigenvalues, and only finitely many negative eigenvalues.  
Using the upper bound of \cite{vodeveven, vodev2},
$n_{-1}(r)=O(r^d)$.  Then
  Theorem 
\ref{thm:lowerbd2} (which implies Theorem \ref{thm:lowerbd})
follows immediately from Theorem \ref{thm:lowerbd3}.
\qed


\begin{thebibliography}{99}

\bibitem{beale} J. T. Beale, 
{\em Purely imaginary scattering frequencies for exterior domains.} 
Duke Math. J.
{\bf  41} (1974), 607-637. 


\bibitem{b-g-r} C. Bardos, J.-C. Guillot, and J. Ralston,
{\em 
La relation de Poisson pour l'\'equation des ondes dans un ouvert non born\'e. Application \`a la th\'eorie de la diffusion.}
 Comm. Partial Differential Equations {\bf 7} (1982), no. 8, 905-958. 

\bibitem{b-g-d} D. Boll\'e, F. Gesztesy, and C. Danneels,
{\em Threshold scattering in two dimensions.} 
Ann. Inst. H. Poincar\'e Phys. Th\'eor. {\bf 48} (1988), no. 2, 
175-204.

\bibitem{b-c-h-p} D. Borthwick, T.J. Christiansen, P.D. Hisop, and P.A. 
Perry, {\em 
Resonances for manifolds hyperbolic near infinity: optimal lower bounds on order of growth.}
Int. Math. Res. Not. IMRN 2011, no. 19, 4431-4470. 


\bibitem{burq} N. Burq, {\em Lower bounds for shape resonances widths 
of long range Schr\"odinger operators.} Amer. J. Math. {\bf 124} (2002), 
no. 4, 677-735.

\bibitem{c-v} F. Cardoso and G. Vodev, {\em Uniform estimates of the 
resolvent of the Laplace-Beltrami operator on infinite volume 
Riemannian manifolds. II.} Ann. Henri Poincar\'e {\bf 3} (2002), no. 4, 673-691. 

\bibitem{chobstacle} T.J. Christiansen,
 {\em Lower bounds for resonance counting functions for 
obstacle scattering in even dimensions.}  To appear, Amer. J. Math.



\bibitem{ch-hi4} T.J. Christiansen and P.D. Hislop, 
 {\em Some remarks on resonances in even-dimensional Euclidean
scattering}.  To appear, Transactions of the AMS.

\bibitem{dy-zw} S. Dyatlov and M. Zworski,
 {\em Mathematical theory of scattering
resonances.}  Book in preparation.  http://math.mit.edu/~dyatlov/res/res.pdf

\bibitem{e-p1} J.-P. Eckmann and C.-A. Pillet,
 {\em Spectral duality for planar billiards.}
Comm. Math. Phys. {\bf 170} (1995), no. 2, 283-313. 



\bibitem{g-k} I.C. Gohberg and M.G. Krein, {\em Introduction to the theory of linear nonselfadjoint operators.} Translations of Mathematical Monographs,
 Vol. 18. American Mathematical Society, Providence, R.I. 1969.


\bibitem{govorov} N.V. Govorov,
{\em Riemann's boundary problem with infinite index. }
Edited and with an introduction and an appendix by I. V. Ostrovskiĭ. 
Translated from the 1986 Russian original by Yu. I. Lyubarskiĭ. 
Operator Theory: Advances and Applications, 67. Birkhäuser Verlag, Basel, 1994.

\bibitem{gu-zw} L. Guillop\'e and M. Zworski, 
{\em Scattering asymptotics for Riemann surfaces.}
 Ann. of Math. (2) {\bf 145} (1997), no. 3, 597-660.

\bibitem{hosf} L. H\"ormander, {\em The spectral function of an elliptic 
operator.} Acta Math. {\bf 121} (1968), 193-218. 

\bibitem{ho1} L. H\"ormander, 
{\em The analysis of linear partial differential operators. I. Distribution theory 
and Fourier analysis.} Second edition. 
Springer-Verlag, Berlin, 1990. 

\bibitem{ho4} L. H\"ormander,
{\em The analysis of linear partial differential operators. IV.
Pseudodifferential operators.}  Springer-Verlag, Berlin, 1985.



\bibitem{ivrii} V. Ivrii, {\em The second term of the spectral
 asymptotics for a Laplace-Beltrami operator on manifolds with boundary.}
 Functional Anal. Appl. {\bf 14} (1980), no. 2, 98-106.

\bibitem{jin} L. Jin, {\em Scattering resonances of convex obstacles for general boundary conditions.} Comm. Math. Phys. {\bf 335} (2015), no. 2, 759-807.

\bibitem{l-pdm} P.D. Lax and R.S. Phillips,
{\em Decaying modes for the wave equation in the exterior of an obstacle.}
Comm. Pure Appl. Math. {\bf 22} (1969) 737-787. 

\bibitem{l-p} P.D. Lax and R.S. Phillips, {\em The time delay operator 
and a related trace formula.} In {\em  Topics in functional analysis }
 pp. 197-215,
Adv. in Math. Suppl. Stud., 3, Academic Press, New York-London, 1978. 

\bibitem{le-pe} A. Lechleiter and S. Peters, {\em Analytical characterization
and numerical approximation of interior eigenvalues for impenetrable scatterers
from far fields}.   Inverse Problems {\bf 30} (2014), no. 4, 045006, 22 pp.

\bibitem{levin} B. Ja. Levin, {\em Distribution of zeros of entire functions},
American Mathematical Society, Providence, R.I. 1964.

\bibitem{melrosetrace}
R.B. Melrose, {\em Scattering theory and the trace of the wave group.}
 J. Funct. Anal. {\bf 45} (1982), no. 1, 29-40. 

\bibitem{melrosepoly} R.B.  Melrose, 
{\em Polynomial bound on the number of scattering poles.}
 J. Funct. Anal. {\bf 53} (1983), no. 3, 287-303.

\bibitem{melrose} R.B. Melrose,
{\em The trace of the wave group.}
 Microlocal analysis (Boulder, Colo., 1983), 127-167, Contemp. Math., 
{\bf 27}, Amer. Math. Soc., Providence, RI, 1984.

\bibitem{melroseobstacle}
R. B. Melrose, {\em Polynomial bounds on the distribution of poles 
in scattering by an obstacle}, 
Journ\'ees "\'Equations aux D\'eriv\'ees partielles", Saint-Jean de Monts, 1984. 

\bibitem{pe-st} V. Petkov and L. Stoyanov, {\em Sojourn times of trapping rays and the behavior of the 
modifited resolvent of the Laplacian}, Ann. Inst. H. Poincar\'e (Physique 
Th\'eorique) {\bf 62} (1995), 17-45.

\bibitem{pe-zwsc} V. Petkov and M. Zworski, 
{\em Semi-classical estimates on the scattering determinant.}
 Ann. Henri Poincar\'e {\bf 2} (2001), no. 4, 675-711.

\bibitem{popov} G. Popov, {\em On the contribution of degenerate periodic trajectories to the wave-trace.}
Comm. Math. Phys. {\bf 196} (1998), no. 2, 363-383. 



\bibitem{sj-zw} J. Sj\"ostrand and M. Zworski, 
{\em Complex scaling and the distribution of scattering poles.}
J. Amer. Math. Soc. {\bf 4} (1991), no. 4, 729-769. 

\bibitem{sj-zwlbII} J. Sj\"ostrand and M. Zworski, 
{\em Lower bounds on the number of scattering poles. II.} 
J. Funct. Anal. {\bf 123} (1994), no. 2, 336- 367.

\bibitem{sj-zwconvex} J. Sj\"ostrand and M. Zworski, 
{\em Asymptotic distribution of resonances for convex obstacles.}
Acta Math. {\bf 183} (1999), no. 2, 191-253. 

\bibitem{stefanovqm} P. Stefanov, 
{\em Quasimodes and resonances: sharp lower bounds.}
Duke Math. J. {\bf 99} (1999), no. 1, 75-92. 

\bibitem{stefanov} P. Stefanov, {\em Sharp upper bounds on the number of the 
scattering poles.} J. Funct. Anal. {\bf 231} (2006), no. 1, 111-142.


\bibitem{tang} S.-H. Tang, {\em Existence of resonances for metric scattering in even dimensions.}
Lett. Math. Phys. {\bf 52} (2000), no. 3, 211-223. 

\bibitem{vodevsb} G. Vodev, {\em Sharp polynomial bounds on the number of 
scattering poles for perturbations of the Laplacian}, Comm. Math. Phys., {\bf 146} (1992), 39-49.

\bibitem{vodeveven} G. Vodev, 
{\em Sharp bounds on the number of scattering poles in even-dimensional spaces.}
Duke Math. J. {\bf 74} (1994), no. 1, 1-17. 

\bibitem{vodev2} G. Vodev, 
{\em Sharp bounds on the number of scattering poles in the two-dimensional case.}
Math. Nachr. {\bf 170} (1994), 287-297. 



\bibitem{zwspb} M. Zworski, {\em Sharp polynomial bounds on the 
number of scattering poles}, Duke Math. J., {\bf 59}
(1989), 311-323.

\bibitem{zwrp} M. Zworski, {\em Sharp polynomial bounds on the number of scattering poles of radial potentials.}
J. Funct. Anal. {\bf 82} (1989), no. 2, 370-403. 

\bibitem{zwpfodd} M. Zworski, {\em
Poisson formulae for resonances.} S\'eminaire sur les \'Equations aux 
D\'eriv\'ees Partielles, 1996–1997, Exp. No. XIII, 14 pp., École Polytech., Palaiseau, 1997.

\bibitem{zworskieven} M. Zworski, {\em Poisson formula for resonances 
in even dimensions.}
Asian J. Math. {\bf 2} (1998), no. 3, 609-617. 

\bibitem{zworskiremark} M. Zworski,
{\em A remark on: "Inverse resonance problem for $\Integers_2$-symmetric 
analytic obstacles in the plane''}
 [in Geometric methods in inverse problems and PDE control, 289-321, 
Springer, New York, 2004] by S. Zelditch.
Inverse Probl. Imaging 1 (2007), no. 1, 225-227. 
\end{thebibliography}
\end{document}